\documentclass[12pt]{amsart}
\usepackage[utf8]{inputenc}
\usepackage[letterpaper, margin=1in]{geometry}
\usepackage[colorlinks=true, allcolors=blue,linkcolor=blue, citecolor=blue]{hyperref}
\usepackage[capitalise]{cleveref}
\usepackage{amsfonts}
\usepackage{amsmath}
\usepackage[symbol]{footmisc}
\usepackage{amssymb}
\usepackage{graphicx}

\usepackage{verbatim}
\usepackage{cleveref}

\newcommand{\norm}[1]{\left\|#1\right\|}
\usepackage{tikz}
\usepackage[shortlabels]{enumitem}
\usepackage{bbm}
\usepackage{enumitem}
\newcommand{\R}{\mathbb R}
\newlist{Properties}{enumerate}{2}
\setlist[Properties]{label=Property \arabic*., font=\textbf, itemindent=*}
\usepackage{amsthm}

\newtheorem{asu}{Assumption}
\newcounter{subassumption}[asu]

\makeatletter
\renewcommand{\p@subassumption}{\theasu}
\makeatother

\newtheorem{Theorem}{Theorem}[section]
\newtheorem{Definition}[Theorem]{Definition}
\newtheorem{Lemma}[Theorem]{Lemma}
\newtheorem{Proposition}[Theorem]{Proposition}

\newtheorem{Remark}[Theorem]{Remark}
\newtheorem{property}[Theorem]{Property}

\newcommand{\intersect}{\cap}
\newcommand{\union}{\cup}
\newcommand{\h}{\mathcal{H}}
\newcommand{\re}{\mathbb{R}}

\newcommand{\mt}{\mathcal{T}}
\newcommand{\mtd}{\mathcal{T}^\delta}

\newcommand{\mf}{\mathcal{F}}

\newcommand{\cone}{\mathcal{C}}
\newcommand{\e}{\varepsilon}
\newcommand{\ms}{\mathcal{S}}

\newcommand{\lt}{\mathcal{L}^n}
\newcommand{\Ln}{\mathcal{L}^n}
\newcommand{\pa}{\partial}

\newcommand{\ssubset}{\subset\joinrel\subset}

\usepackage[mathscr]{euscript}
\DeclareSymbolFont{rsfs}{U}{rsfs}{m}{n}
\DeclareSymbolFontAlphabet{\mathscrsfs}{rsfs}
\numberwithin{equation}{section}

\title[Local minimizer in $\re^n$ with an $(n+1)$-junction.]{Local minimizers in $\mathbb{R}^n$ of vector Allen-Cahn with an $(n+1)$-junction. }
\author{Abhishek Adimurthi}
\address{Department of Mathematics, Indiana University 
Bloomington, IN 47405, USA.}
\email{abadim@iu.edu, abhishek.adimu@gmail.com}

\begin{document}

\begin{abstract}
For a domain $\Omega$ that is a deformation of a unit ball in $\R^n$, we establish the existence of a sequence of local minimizers for the vector Allen-Cahn energy having $n+1$ wells. This sequence converges in the $L^1$ topology to a partition of $\Omega$ whose skeleton is given by a simplex cone that contains an $(n+1)$-junction point. This is accomplished by proving that the partition is an isolated local minimizer of a weighted perimeter problem arising as the associated $\Gamma$-limit of the sequence of Allen-Cahn functionals. The results established in this article generalize those in \cite{Abhi+Peter} which dealt with the case $n=3$. We also weaken the one crucial assumption from \cite{Abhi+Peter}.

\end{abstract}

\maketitle
\pagestyle{myheadings}
\markright{ }

\section{Introduction.}

Weakening an assumption and generalizing the results in
\cite{Abhi+Peter} to dimensions $n\geq 4$, we establish the existence of an isolated $L^1$-local minimizer for a weighted partitioning problem. The minimizing $(n+1)$-chambered partition exists within a suitably constructed bounded domain $\Omega\subset\R^n$. 
A key feature of this partition is that its boundary is the restriction of $\Omega$ to an asymmetric minimizing cone possessing an $(n+1)$-junction point. Through the now standard machinery of $\Gamma$-convergence, by \cite{KohnPS}, the existence of a local minimizer of the associated $(n+1)$-well vector Allen-Cahn (Modica-Mortola) energy immediately follows, namely a diffuse version of the asymmetric minimizing cone.

The articles \cite{Abhi+Peter,Zeimer} establish the analogue of this result in lower dimensions when $n=2,3$. Motivated by the same articles \cite{Abhi+Peter,Zeimer}, the goal here is to construct a domain $\Omega \subset \re^n$ and a partition $\ms$ of $\Omega$ into $n+1$ regions such that $\ms$ has a junction point. Moreover, the constructed partition's interior weighted surface area will be shown to be the smallest among small perturbations in $L^1$ topology. 

 There are two key contributions to this article. The first key contribution of this article is the construction of a suitable domain $\Omega$ to be partitioned by a cone. To do this, first using an $n$-simplex, we will construct a partition of $\re^n$ into $n+1$ regions with the condition that this partition possesses an $(n+1)$-junction point: the unique point where all the $n+1$ regions meet. In the next step, we restrict this partition to the unit ball and deform the surface of the ball appropriately. This deformation is carried out so that, locally near the region where $\pa B(0,1)$ meets the cone, the deformed surface looks like a `convex' dent on the ball. The deformation is explained in a clean and rigorous way later in \cref{Construct}. The difference from \cite{Abhi+Peter,Zeimer} is that the partition and therefore the domain are explicitly defined in lower dimensions when $n=2,3,$ whereas in this article, for dimensions $n\geq 4$, we make use of an $n$-simplex to define a partition and thereby a domain from it. 
 
 The second key difference is in the assumption on the weights of the weighted perimeter problem: in \cite{Abhi+Peter}, we assumed that each weight is strictly greater than half of the others, namely the condition \eqref{asu_old} mentioned below. However, in this article, we assume a slightly weaker version of this estimate i.e., each weight is strictly greater than $(n-2)/n$ of the others: we state this estimate explicitly in Assumption \ref{closeness_of_cij} below. In particular, this weakens the assumption from \cite{Abhi+Peter} when $n=3$.

 Once a suitable domain is constructed that most crucially satisfies Property \ref{property} below, we proceed to establish that the associated partition, say $\mathcal{S}$, is in fact an isolated $L^1$-local minimizer of the weighted perimeter energy. As in \cite{Abhi+Peter}, the proof follows in two steps: first we show that we may restrict the set of competitors to those that are uniformly close to $\mathcal{S}$ on the boundary of $\Omega$; the proof here is similar to that employed in \cite{Abhi+Peter} but now requires an inductive argument. In the next step, we invoke a calibration argument, making use of $n+1$ integrals compared with four integrals in \cite{Abhi+Peter} to conclude that $\mathcal{S}$ is indeed an isolated $L^1$-local minimizer. Finally, with an appeal to \cite{KohnPS},  we conclude the existence of local minimizers of the associated Allen-Cahn energies. 

 Let us make all of this more precise now. For a bounded domain $\Omega$ in $\re^n$, consider the Allen-Cahn energies given by
\begin{equation*}
 E_{\varepsilon}(u,\Omega) := \int_{\Omega} \frac{\varepsilon}{2} |\nabla u|^2 + \frac{1}{\varepsilon} W(u) \,dx, 
\end{equation*}
where $u\in H^1(\Omega;\R^n)$ and $W : \re^n \mapsto [0,\infty)$ is a $C^{1,\alpha}$ potential with $\alpha \in (0,1)$ that vanishes on a set of $n+1$ distinct points $ W_{{\rm zero}}:= \{ p_1, p_2, \dots, p_{n+1}\} \subset \re^n$. We set $\mathcal{L}^n$ to be the $n$-dimensional Lebesgue measure, $\h^{n-1}$ to be the $(n-1)$-dimensional Hausdorff measure and $\pa^*A$ to be the reduced boundary of the set $A$. Then, we define a partition of the domain $\Omega$ as follows:
\begin{Definition}
    For a bounded domain $\Omega$, we say $\mathcal{R} = (R_1,R_2,\dots,R_n,R_{n+1})$ is a partition of $\Omega$ if the following hold:
    \begin{itemize}
        \item Each $R_i \subset \Omega$ is a set of finite perimeter, i.e., $\h^{n-1}( \pa^* R_i \intersect \Omega)$ is finite.
        \item $\mathcal{L}^n\left(\Omega \setminus \left(\union_{i=1}^{n+1}R_i \right)\right) = 0$.
        \item $\mathcal{L}^n(R_i \intersect R_j) = 0$ for any distinct $i,j \in \{1,2,\dots, n+1\}$.
    \end{itemize}
\end{Definition}
The derivation of the $L^1$-$\Gamma$-limit of the sequence $\{E_\varepsilon\}$ is carried out in \cite{Baldo} (see also \cite{PeterVect, FT, Peter88}). The $\Gamma$-limit $E_0(.,\Omega)$ acts on the set of partitions $\mathcal{R} = (R_1,R_2,\dots, R_{n+1})$ of $\Omega$ and is given by 
\begin{equation}\label{formulation_1}
\begin{aligned}
     E_{0} \left(\sum_{i=1}^{n+1}p_i \chi_{R_i},\Omega\right)
     :=  \left\{ \begin{aligned}
         \sum_{1\leq i<j \leq n+1} c_{ij} \h^{n-1} \left( \pa^* R_i \intersect \pa^* R_j  \intersect \Omega \right), \; &\text{ if }\sum_{i=1}^{n+1}p_i \chi_{R_i} \in {\rm BV}(\re^n; W_{{\rm zero}}),\\
         \infty, &\quad \text{otherwise,}
     \end{aligned}
     \right.
\end{aligned}
\end{equation}
 where BV denotes the space of functions having bounded variation. Here the constants $c_{ij}$ are defined by
\begin{equation}\label{defin_c_ij}
     c_{ij} := \inf \left\{ \sqrt{2} \int_0^1 \sqrt{W(\gamma(t))} | \gamma'(t)| dt \left| \begin{aligned}
        &\gamma\in C^1([0,1]; \re^n), \\
        &\gamma(0) = p_i, \gamma(1) = p_j.
    \end{aligned}\right. \right\}, \text{ for } p_i, p_j \in W_{\rm zero}.
\end{equation}

In order to accommodate the situation when $i$ or $j$ equals $n+1$, we sometimes write $c_{ij}$ as $c_{i,j}$ in this article. Furthermore, we will often denote $E_0$ in \eqref{formulation_1} by $E_0(\mathcal{R},\Omega)$, where $\mathcal{R}= (R_1,\dots,R_{n+1})$ is a partition of $\Omega$. Equivalently, one can define $E_0(\mathcal{R},\Omega)$ by 
\begin{equation}\label{formulation_2}
    E_0(\mathcal{R},\Omega)=\sum_{i=1}^{n+1} c_i\h^{n-1}(\pa^*R_i\cap\Omega),
\end{equation}
where the relation between the constants $c_{ij}$ and $c_i$ is given by $c_{ij} = c_i +c_j$, for $i\neq j$. In this article, the weights $c_i$ will be assumed to satisfy Assumption \ref{assumption_c_ij_positive} and  Assumption \ref{closeness_of_cij} below. Before stating the assumptions and their uses, we note that any partition $\ms = (S_1,\dots, S_{n+1})$ of $\Omega$ that locally minimizes $E_0$ must satisfy the following stationarity condition: for $\vec{n}_{i,j}$ denoting the unit normal to $\pa^* S_i \intersect \pa^* S_j$, pointing from $S_i$ to $S_j$, for every distinct triple of indices
$i,j,k \in \{1,2,\dots, n+1\}$, the following should hold true:
\begin{equation}\label{gencij}
c_{ij} \vec{n}_{i,j} + c_{jk} \vec{n}_{j,k} + c_{ki} \vec{n}_{k,i} = 0 .
\end{equation}
The identity \eqref{gencij} can be derived by setting the first variation of $E_0$ locally in some neighborhoods inside $\Omega$ to be zero and this is explained more clearly in Section \ref{Construct} below.

We now mention a criterion that the domain and the partition of it should satisfy. Consider a domain $\Omega$ containing the origin and a partition $\ms = (S_1, S_2, \dots , S_{n+1})$ of $\Omega$ having the origin as a junction point. We say that the pair $(\Omega,\ms)$ is {\it admissible} for Theorem \ref{isolated_local_minimizer} below, if the following property holds:
\begin{property}\label{property}
 There exists a real number $\eta = \eta(\Omega) > 0$ such that, for every distinct pair of indices
$i,j \in \{1,2,\dots, n+1\}$ and for every point
\[
 x \in \partial S_i \cap \partial \Omega \text{ such that }
\operatorname{dist}(x, \pa S_i \intersect \pa S_j \cap \partial \Omega) < \eta ,
\]
one has the following property:
\begin{equation}\label{goodnormals}
\begin{cases}
\vec{n}_{i,j} \cdot \vec{\nu}_{\partial \Omega}(x) > 0, & \text{if } x \notin \pa S_i \intersect \pa S_j \cap \partial \Omega, \\[4pt]
\vec{n}_{i,j} \cdot \vec{\nu}_{\partial \Omega}(x) = 0, & \text{if } x \in \pa S_i \intersect \pa S_j \cap \partial \Omega,
\end{cases}
\end{equation}
where $\vec{n}_{i,j}$ denotes the unit normal to the hyperplane $\pa S_i \intersect \pa S_j \intersect\Omega$ pointing from the region $S_i$ toward $S_j$ and $\vec{\nu}_{\pa \Omega}(x)$ is the unit outward normal to $\Omega$ at $x \in \pa \Omega$.
\end{property}
We will now mention the two assumptions on $c_{ij}$ and their uses, one at a time:
\begin{asu}\label{assumption_c_ij_positive} The quadratic form $F : \R^n \mapsto \R$ given by
\begin{equation}\label{quadratic_form}
    F(x_2,x_3,\dots, x_n, x_{n+1}) := \frac{1}{2}\sum_{i,k=2}^{n+1} \left( c_{1i}^2 + c_{1k}^2 - c_{ik}^2\right) x_ix_k
\end{equation}
is positive definite.
\end{asu}
Assumption \ref{assumption_c_ij_positive} will be used to describe a {\it simplex cone} that will be obtained from an $n$-simplex and that satisfies \eqref{gencij}. By \cite[Theorem 1]{Schoenberg_Annals}, one sees that Assumption \ref{assumption_c_ij_positive} is necessary and sufficient for the existence of an $n$-simplex with vertices $V_1, V_2, \dots, V_{n+1} \in \mathbb{R}^n$ such that the length of the edge connecting $V_i$ and $V_j$ is $c_{ij}$. Then, for all distinct pairs $i,j \in \{1,2,\dots,n+1\}$, a unit normal vector to the hyperplane separating phase $i$ from phase $j$ can be defined by
\begin{equation}\label{defn_of_n_ij}
\vec{n}_{i,j} := \frac{1}{c_{ij}}\; (V_j - V_i).
\end{equation}
 We note here that \eqref{defn_of_n_ij} implies that the {\it simplex cone} that we obtain will satisfy \eqref{gencij}. By construction, for each distinct pair $i,j$, there exists an $(n-1)$-dimensional hyperplane containing the circumcenter of the $n$-simplex and having unit normal $\vec{n}_{ij}$.
 There are $\binom{n+1}{2}$ such $(n-1)$-dimensional hyperplanes in total. When these hyperplanes are considered together, some of them meet some other hyperplanes. So, in each of these hyperplanes, pick the largest subset of the hyperplane that contains the circumcenter, meets the line segment $\overline{V_i V_j}$ orthogonally and the manifold boundary of this part of the hyperplane touches the other hyperplanes. The collection of these $(n-1)$-dimensional hyperplanes has a unique $(n+1)$-junction and we will call this collection of all of the $(n-1)$-dimensional hyperplanes together that partitions the whole of $\re^n$ into $n+1$ regions, a {\it simplex cone}. We will denote this {\it simplex cone} by $\tilde{\cone}$ in this article.  
 
 \begin{Remark}
     By the calibration argument below {\rm(}also see \cite{Morgan}{\rm)}, one sees that the partition $\tilde{\ms}$ of $\re^n$ minimizes the weighted perimeter $E_0$ locally in the sense of De Giorgi. Or equivalently, the infinite asymmetric simplex cone is a minimizing cone in the sense of De-Giorgi. That is, for any compact set $K \subset \re^n$, $E_0(\tilde{\ms},K)$ is the smallest among competitors that agree with $\tilde{\ms}$ outside $K$. However, for the content of this article, we do not assume that the boundary values are fixed.
  \end{Remark}
  
  Upon choosing the co-ordinates appropriately, one can assume that the $(n+1)$-junction is the origin in $\re^n$. For examples of simplex cones, in $\re^2$, $\tilde{\cone}$ is a triad - see \cref{triad} and in $\re^3$, the corresponding simplex cone is a tetrahedral cone - see \cref{Tetrahedral_cone}. We will denote the partition and therefore the regions of $\re^n$ partitioned by this simplex cone $\tilde{\cone}$ by $\tilde{\ms} = (\tilde{S}_1,\tilde{S}_2,\dots,\tilde{S}_{n+1})$. In \cref{Construct}, we will use this cone to describe a domain $\Omega$ that is admissible for \cref{isolated_local_minimizer} below i.e., that satisfies Property \ref{property}. Later on, we will use the ideas in \cite{Abhi+Peter} to conclude the proof of \cref{isolated_local_minimizer}.

  We will make another assumption on the constants $c_{ij}$, namely

  \begin{asu}\label{closeness_of_cij}
      The constants $c_i$ defined in \eqref{formulation_2} are assumed to satisfy:
      \begin{equation}\label{cijpospos}
        \text{\rm For all distinct }i,j \; \text{\rm in } \{ 1,2,\dots, n+1\},\; c_{i}> \left(\frac{n-2}{n}\right)c_{j}.
      \end{equation}
  \end{asu}
The author believes that Assumption \ref{closeness_of_cij} could be removed entirely but could not find a way to eliminate this assumption. Moreover, the condition \eqref{cijpospos} will later be used in the corralling step i.e., in the proof of \ref{3.2.2} of Theorem \ref{lemma1}.
\begin{figure}
    \centering
    \includegraphics[width=0.9\linewidth]{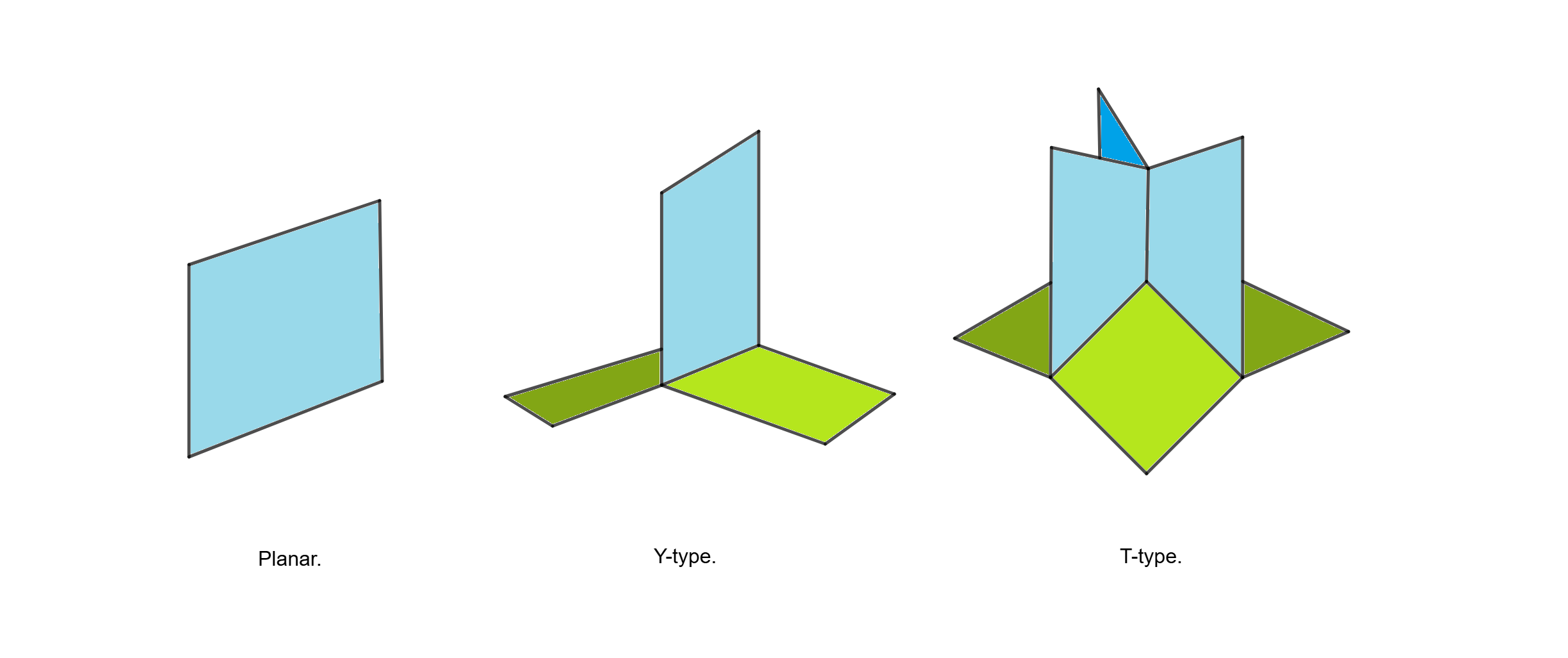}
    \caption{Classification of minimizing symmetric tangent cones in $\re^3$ by Taylor \cite{Taylor}.}
    \label{Taylor}
\end{figure}


 \begin{Remark}
    For some $c>0$, if $c_{i} = c$ for all $1 \leq i \leq n+1$, then the right-hand side {\rm (R.H.S.)} of \eqref{quadratic_form} takes the form 
    \[
    \frac{1}{2}\sum_{i,k=2}^{n+1}\left( c_{1i}^2 + c_{1k}^2 - c_{ik}^2\right) x_ix_k = \frac{4c^2}{2} \left( \sum_{i=2}^{n+1} x_i \right)^2,
    \]
    which is positive for any $(x_2,x_3 ,\dots, x_{n+1}) \in \R^n$. Also, in this case when all the constants $c_i$ are equal to $c$, the inequality in \eqref{cijpospos} turns out to be
    \[
    1 > \frac{n-2}{n} \iff n > n-2,
     \]
     which is a true statement for all $n\in \mathbb{N}$. Therefore, the condition \eqref{cijpospos} implies that the constants $c_i$ are not too far from each other. We note here that the constants $c_{ij}$ defined from \eqref{defin_c_ij} need not always satisfy Assumption \ref{assumption_c_ij_positive}. Moreover, in \cite{Abhi+Peter}, we assumed that the constants $c_{i}$ satisfy the relation: 
    \begin{equation}\label{asu_old}
    \text{ \rm In the case $n=3$, for any distinct $i,j \in \{1,2,3,4\}$, one has }2c_i > c_j.
    \end{equation}
    In this article, Assumption \ref{closeness_of_cij} is slightly more general than the one in \eqref{asu_old} assumed in \cite{Abhi+Peter} as \eqref{cijpospos} assumes that $3c_i>c_j$ for all distinct $i,j$ in the case when $n=3$.

    Additionally, we claim that Assumption \ref{closeness_of_cij} implies Assumption \ref{assumption_c_ij_positive} and the proof of this claim is given in Proposition \ref{propn_asu} below. Although this claim makes Assumption \ref{assumption_c_ij_positive} redundant, we nevertheless stated it above to explain the existence of an  asymmetric simplex cone.
\end{Remark}

In \cite{Taylor}, Taylor showed that the symmetric tangent cones minimizing the perimeter in $\re^3$ are either a plane or three planes meeting along a line or a tetrahedral cone; see \cref{Taylor}. Here, a symmetric cone refers to the setting where if two planes meet along a line, then the angle between them is $120^{\circ}$. An analogous classification is not yet known in the asymmetric setting in $\re^n$ for $n\geq 3$. A possible approach is to use the results from \cite{Abhi+Peter} and this article. One can also use the results in this article to study the classification of the blowups and blowdowns of local minimizers in higher dimensions. The main result of this article is the following theorem:
\begin{Theorem}\label{isolated_local_minimizer}
    Let $c_{ij}$ be constants that satisfy Assumptions \ref{assumption_c_ij_positive} and \ref{closeness_of_cij}. 
    Then, there exists a domain $\Omega \subset \R^n$ and a partition $\ms = (S_1,S_2,\dots,S_n, S_{n+1})$ of $\Omega$, having an $(n+1)$-junction such that the pair $(\Omega, \ms)$ satisfies Property \ref{property} and that $\ms$ is an isolated $L^1$-local minimizer of $E_0(.,\Omega)$. More precisely, there exists $\delta>0$ such that for any partition $\mt$ of $\Omega$, one has
\begin{equation*}
E_0(\mt,\Omega) > E_0(\ms,\Omega)\quad\text{provided}\quad 0< \Ln \left( \mt \Delta \ms\right) \leq \delta.   
\end{equation*}
\end{Theorem}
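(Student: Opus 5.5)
The proof will run in three stages: construct the domain, corral competitors near $\ms$ on $\pa\Omega$, then calibrate.

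\emph{Stage 1: the domain.} Start from the simplex cone $\tilde{\cone}$ built from the $n$-simplex with edge lengths $c_{ij}$, which exists by Assumption \ref{assumption_c_ij_positive}, and the partition $\tilde{\ms}$ of $\re^n$. Restrict $\tilde{\ms}$ to $B(0,1)$. Near each set $\pa\tilde S_i\cap\pa\tilde S_j\cap\pa B(0,1)$, deform the sphere inward by a small convex dent.
\begin{itemize}
\item The deformed boundary should meet each interface $\{\vec n_{i,j}\cdot x=0\}$ orthogonally along their intersection.
\item In a neighborhood of width $\eta$, the boundary should tilt so that $\vec n_{i,j}\cdot\vec\nu_{\pa\Omega}>0$ on the $S_i$ side. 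By symmetry this gives $<0$ on the $S_j$ side, i.e.\ $\vec n_{j,i}\cdot\vec\nu>0$ there.
\end{itemize}
I would do this with a graph perturbation of the radial function supported near the $(n-2)$-dimensional traces of the interfaces on the sphere. Near lower-dimensional strata, where several interfaces meet $\pa B$, the dents must be blended compatibly. Setting $\Omega$ to be the deformed ball and $\ms=\tilde{\ms}\cap\Omega$ then yields Property \ref{property}.

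\emph{Stage 2: corralling.} Given a competitor $\mt$ with $\Ln(\mt\Delta\ms)\le\delta$, I would show there is another competitor $\mt'$ with $E_0(\mt')\le E_0(\mt)$, strict unless $\mt'=\mt$. The competitor $\mt'$ should coincide with $\ms$ on $\pa\Omega$ away from an $\eta$-neighborhood of the interface traces. Concretely:
\begin{itemize}
\item Use the relative isoperimetric inequality, and a slicing/coarea argument in thin annular shells near $\pa\Omega$, to find a level at which each $T_i$ has small trace outside $S_i$.
\item Replace $\mt$ by $\ms$ beyond that level. The added interface area is controlled by the removed area.
\end{itemize}
Doing this region by region requires induction on the number of phases present in a boundary cell: first handle interiors of the $S_i\cap\pa\Omega$, then the neighborhoods of pairwise traces, triple traces, and so on. At each step one must reassign a small set currently labelled $k$ to the correct phase $i$. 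The cost comparison involves sums of $c_k$ against $c_i$ over the up to $n$ faces of a small cell, and this is exactly where Assumption \ref{closeness_of_cij}, $c_i>\frac{n-2}{n}c_j$, is needed to make the replacement energetically favorable. I expect this inductive corralling to be the main obstacle: getting the combinatorial weight inequality sharp in all configurations, uniformly in $\delta$.

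\emph{Stage 3: calibration.} Use the constant vector fields $V_i$, the simplex vertices, translated so that $\sum_i V_i=0$ is not needed; only differences matter. For a partition $\mt$ agreeing with $\ms$ near $\pa\Omega$ in the corralled sense, apply the divergence theorem:
\[
\sum_{i}\int_{\pa^*T_i\cap\Omega}V_i\cdot\nu_{T_i}\,d\h^{n-1}
=\sum_i\int_{\pa\Omega\cap T_i}V_i\cdot\vec\nu_{\pa\Omega}\,d\h^{n-1}.
\]
On the interior interfaces, $(V_j-V_i)\cdot\nu\le c_{ij}$ with equality for $\ms$ by \eqref{defn_of_n_ij}. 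So the left-hand side is at most $E_0(\mt)$ and equals $E_0(\ms)$ for $\ms$. The boundary terms differ only where $T_i\neq S_i$ on $\pa\Omega$, which is within the $\eta$-strips. There, Property \ref{property} gives the sign $(V_j-V_i)\cdot\vec\nu_{\pa\Omega}>0$, so the boundary term for $\mt$ is at least that for $\ms$, with strict inequality unless the traces coincide. Equality throughout forces each interface normal to be $\vec n_{i,j}$ and the traces to agree, hence $\mt=\ms$. Combining this with Stage 2 gives the strict inequality for $0<\Ln(\mt\Delta\ms)\le\delta$.
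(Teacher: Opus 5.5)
\textbf{The gap is in Stage 2, at the step you flag yourself.} Your Stages 1 and 3 are essentially the paper's. The calibration with constant fields satisfying $V_j-V_i=c_{ij}\vec n_{i,j}$ is the same Gauss--Green computation, and the sign from Property~\ref{property} on the $\eta$-strips is used the same way.

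The problem is the rule of reassigning a wrong phase $k$ to a \emph{prescribed} correct phase $i$. This is not energetically favorable in general. Take a point near $\pa S_i\cap\pa S_j\cap\pa\Omega$, on the $S_i$ side. Suppose a small blob of $T_k$ touches $\pa\Omega$ there but is surrounded in $\Omega$ by $T_j$. Its interface costs $c_j+c_k$ per unit area. After you give it to $i$, the cost becomes $c_i+c_j$, which is larger whenever $c_i>c_k$, and Assumption~\ref{closeness_of_cij} does not exclude that.

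The missing idea is to choose the receiving phase adaptively, scale by scale. For each radius $s$, give $T_{i_0}\cap B(x_0,s)$ to a phase $j_0(s)$ that carries the largest share of $\pa^*T_{i_0}\cap B(x_0,s)$. By pigeonhole, $t^s_{j_0,i_0}\ge\frac1n\sum_{p\ne i_0}t^s_{p,i_0}$, so
\[
\sum_{j\ne j_0,i_0}t^s_{j,i_0}-t^s_{j_0,i_0}\le\frac{n-2}{n}\sum_{p\neq i_0}t^s_{p,i_0}.
\]
The energy comparison then gives $[nc_{i_0}-(n-2)c_{j_0}]\,\mathrm{Per}(T_{i_0}\cap B(x_0,s);\Omega)\le(2c_{j_0}+2nc_{i_0})\,p'(s)$. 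This is exactly where the factor $\frac{n-2}{n}$ comes from and where Assumption~\ref{closeness_of_cij} enters.

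\textbf{Level selection.} Finding a level at which the wrong phases have small trace is not enough. You need a level $s$ with $p'(s)\le\Lambda\,p(s)^{(n-1)/n}$, so that the relative isoperimetric inequality makes the removed interface dominate the added slice. Absolute smallness of the slice fails when $p(s)$ is much smaller still.

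The paper avoids level selection by not modifying arbitrary competitors at all. It takes a minimizer $\mathcal{T}^\delta$ of $E_0$ over $\mathcal{A}^\delta$. It tests this minimizer against the ball surgeries above at every $x_0\in\pa\Omega$ and every radius $s$, which gives the differential inequality $\Lambda p^{(n-1)/n}\le p'$. Smallness of $\delta$ gives the density hypothesis $p(r)\le\varepsilon_0 r^n$, because $T_{i_0}\cap B(x_0,r)$ lies in $\mathcal{T}^\delta\Delta\mathcal{S}$. Integrating then gives $p\equiv 0$ on $(0,r/2)$. Calibration forces $\mathcal{T}^\delta=\mathcal{S}$, and uniqueness of the constrained minimizer yields the strict inequality.

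Your direct route could probably be made to work with the pigeonhole choice and an ODE-type level selection. You would still have to assemble the local surgeries into one global corralled $\mathcal{T}'$. That means controlling the lateral boundaries of the modified regions next to the $\eta$-strips, and the overlaps between regions attached to different strata. The constrained-minimizer device sidesteps both.
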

Here, for $\mathcal{R}:= (R_1,R_2,\dots, R_{n+1})$ and $\mathcal{R}':= (R_1',R_2',\dots, R_{n+1}')$ partitions of $\Omega$, we denote the symmetric difference of these partitions by
\[
\mathcal{R}\Delta \mathcal{R}' := \bigcup_{i=1}^{n+1} \Big[(R_i \setminus R_i') \union (R_i' \setminus R_i)\Big].
\]

\begin{figure}
    \centering
    \includegraphics[width=0.2\linewidth]{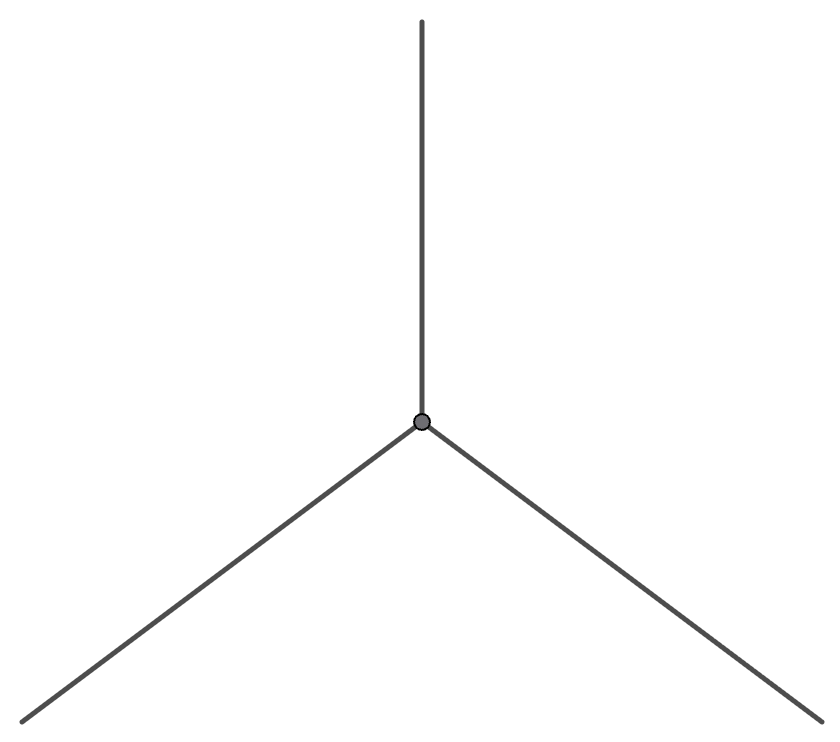}
    \caption{A simplex cone in $\re^2$ is a triad. A triad has a triple junction and partitions the whole of $\re^2$ into 3 regions. When $c_{ij}$ are all equal, then the angle between any two lines is $120^{\circ}$. }
    \label{triad}
\end{figure}

\begin{figure}
    \centering
    \includegraphics[width=0.5\linewidth]{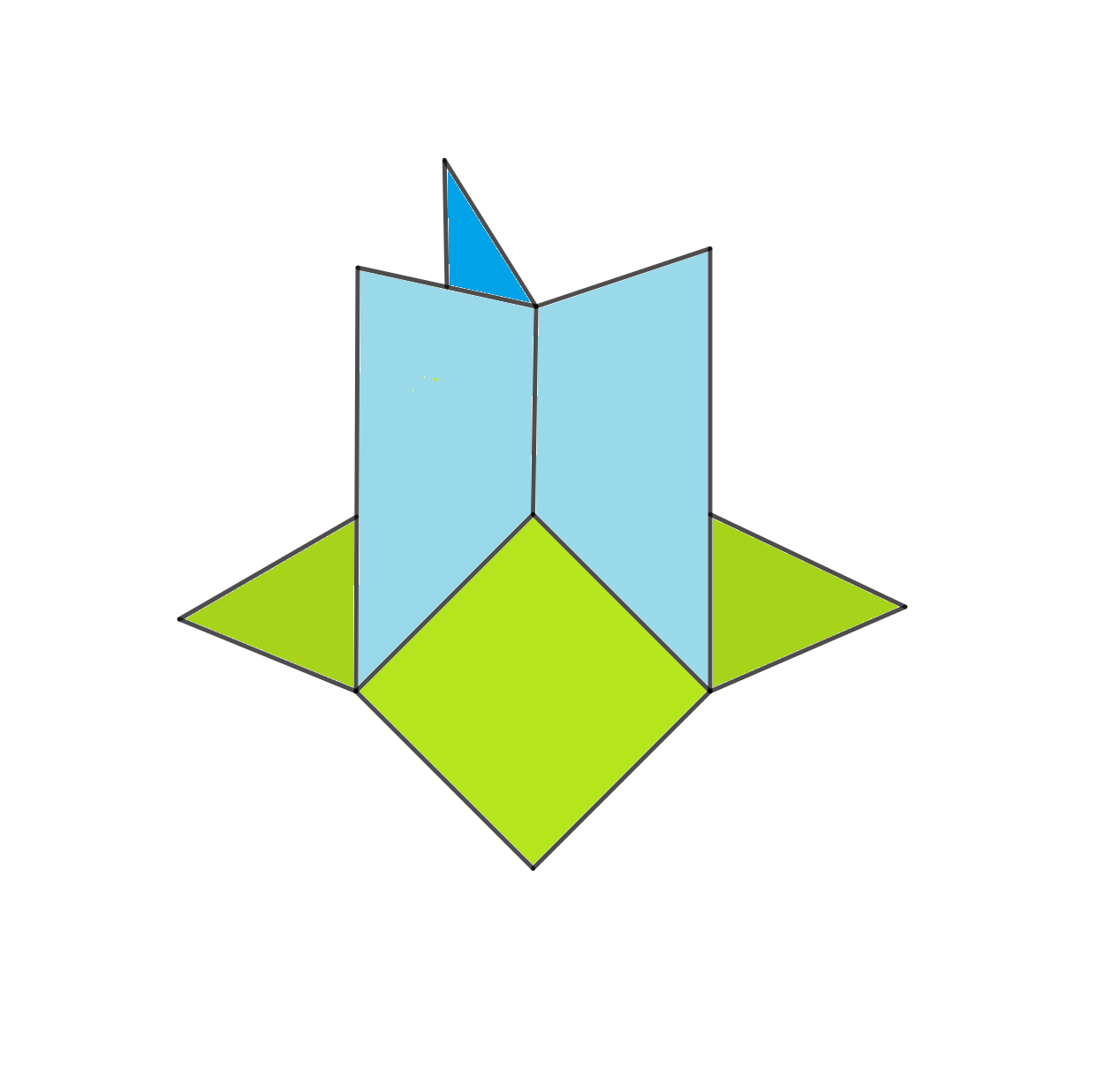}
    \caption{A simplex cone in $\re^3$ is a tetrahedral cone. A tetrahedral cone has a quadruple junction and partitions the whole of $\re^3$ into 4 regions. When $c_{ij}$ are all equal, then the angle between any two planes that meet along a line is $120^{\circ}$.}
    \label{Tetrahedral_cone}
\end{figure}

In lower dimensions, when $n=2$, the results in \cite{Zeimer} establish the existence of an isolated $L^1$-local minimizer where the corresponding simplex cone is a triad. When $n=3$, in \cite{Abhi+Peter} we show the existence of an isolated $L^1$-local minimizer for a partition whose simplex cone is a tetrahedral cone. As an immediate consequence of \cite[Theorem 4.1]{KohnPS}, Theorem \ref{isolated_local_minimizer} implies the following theorem:
\begin{Theorem}\label{pde}
    There exists a domain $\Omega\subset\R^n$ and a partition $\ms:=(S_1,\dots, S_{n+1})$ of $\Omega$ such that each $S_i$ has finite perimeter and that $\ms$  possesses an $(n+1)$-junction point. Further, set 
     \[u_0(x) := \sum_{i=1}^{n+1}p_i \chi_{S_i}(x) \;, \; \text{ for } x \in \Omega.\]
     Then, for all $\e$ sufficiently small, there exists an $L^1$-local minimizer $u_\e$ of the energy $E_\e(.,\Omega)$, satisfying the property that as $\e$ goes to 0, $u_\e\to u_0$ in the topology of $L^1(\Omega)$.
\end{Theorem}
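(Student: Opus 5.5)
The statement to prove is Theorem \ref{pde}. It follows from Theorem \ref{isolated_local_minimizer} combined with the abstract result of Kohn and Sternberg \cite[Theorem 4.1]{KohnPS}. That result says: if $E_\e$ $\Gamma$-converges in $L^1$ to $E_0$, the family $\{E_\e\}$ is equicoercive, and $u_0$ is an isolated $L^1$-local minimizer of $E_0$, then for all small $\e$ there exist $L^1$-local minimizers $u_\e$ of $E_\e$ with $u_\e\to u_0$ in $L^1$. So the plan is to check these three hypotheses in our setting and apply the theorem.

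\textbf{Step 1 (the limit partition).} Take $\Omega$ and $\ms=(S_1,\dots,S_{n+1})$ from Theorem \ref{isolated_local_minimizer}. Since $\Omega$ is bounded and the $S_i$ are restrictions of the polyhedral simplex cone $\tilde{\cone}$ to $\Omega$, each $S_i$ has finite perimeter. The origin is the $(n+1)$-junction point. Hence $u_0=\sum_i p_i\chi_{S_i}\in {\rm BV}(\Omega;W_{\rm zero})$ and $E_0(u_0,\Omega)<\infty$.

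\textbf{Step 2 (isolation in the function topology).} Theorem \ref{isolated_local_minimizer} gives $\delta>0$ with $E_0(\mt,\Omega)>E_0(\ms,\Omega)$ whenever $0<\Ln(\mt\Delta\ms)\le\delta$. This has to be rephrased in terms of $\|v-u_0\|_{L^1}$.
\begin{itemize}
\item A $v$ with $E_0(v)<\infty$ takes values in the finite set $W_{\rm zero}$, so $v=\sum_i p_i\chi_{T_i}$ for some partition $\mt=(T_1,\dots,T_{n+1})$.
\item With $m:=\min_{i\ne j}|p_i-p_j|>0$, we have $\|v-u_0\|_{L^1}\ge m\,\Ln(\{v\ne u_0\})$, and $\{v\neq u_0\}$ is a.e.\ equal to $\mt\Delta\ms$.
\item Choose $\rho=m\delta$. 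If $0<\|v-u_0\|_{L^1}\le\rho$, then $0<\Ln(\mt\Delta\ms)\le\delta$, so $E_0(v)>E_0(u_0)$.
\item If $E_0(v)=\infty$, the inequality $E_0(v)>E_0(u_0)$ is trivial.
\end{itemize}
Thus $u_0$ is an isolated local minimizer of $E_0$ in the $L^1(\Omega;\R^n)$-ball of radius $\rho$.

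\textbf{Step 3 ($\Gamma$-convergence and compactness).} The $\Gamma$-convergence of $E_\e(\cdot,\Omega)$ to $E_0(\cdot,\Omega)$ in $L^1$ is Baldo's theorem \cite{Baldo}, valid for $C^{1,\alpha}$ potentials with finitely many wells under standard growth conditions on $W$ at infinity. I would state these conditions as standing hypotheses, for instance $W(u)\ge c|u|^2$ for large $|u|$. Equicoercivity (sequences with bounded energy are precompact in $L^1$) follows from the Modica--Mortola trick. One truncates $u$ so that it stays bounded, which only lowers the energy under the growth assumption. Then one applies the inequality $\frac{\e}{2}|\nabla u|^2+\frac1\e W(u)\ge \sqrt{2W(u)}\,|\nabla u|$ to each composition $d_i(u)$, where $d_i(z)$ is the geodesic distance from $z$ to $p_i$ in the degenerate metric $\sqrt{2W}$. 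This gives BV bounds on $d_i(u)$, and BV compactness then yields $L^1$ precompactness.

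\textbf{Conclusion and main obstacle.} With Steps 1--3, \cite[Theorem 4.1]{KohnPS} produces, for all small $\e$, a local minimizer $u_\e$ of $E_\e(\cdot,\Omega)$ in the $L^1$-ball $B_\rho(u_0)$ with $u_\e\to u_0$. The mechanism: minimize $E_\e$ over the closed ball $\overline{B_\rho(u_0)}$; $\Gamma$-convergence plus isolation force these minimizers to converge to $u_0$, so they lie in the open ball and are genuine $L^1$-local minimizers. The real difficulty is entirely inside Theorem \ref{isolated_local_minimizer}, which is assumed. Within the present deduction, the only delicate point is verifying that Baldo's hypotheses on $W$ hold, since the paper states only $C^{1,\alpha}$ regularity and the zero set. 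I would handle this by adding the growth condition explicitly or by citing the version of \cite{KohnPS} in which compactness is built into the assumptions.
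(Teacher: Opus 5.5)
Your proposal is correct and follows the paper's route. The paper obtains this theorem in one line as an immediate consequence of Theorem \ref{isolated_local_minimizer} and \cite[Theorem 4.1]{KohnPS}, with the $\Gamma$-limit supplied by \cite{Baldo}; your Steps 1--3 make explicit what the paper leaves implicit, namely converting isolation in $\Ln(\mt\Delta\ms)$ into isolation in $\|v-u_0\|_{L^1}$ via $m=\min_{i\neq j}|p_i-p_j|$, and the compactness and growth hypotheses on $W$ (one small caveat: radial truncation lowers the potential term only if $W$ is nondecreasing along rays far out, not merely under $W(u)\ge c|u|^2$, but compactness can be obtained without truncation lowering the energy, or taken from \cite{Baldo}, so the argument is unaffected).
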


\noindent\underline{\bf Organization of the article:} In Section \ref{Construct}, we give an explicit example of a domain and a partition that is admissible for Theorem \ref{isolated_local_minimizer}. In \cref{require}, we mention a couple of results that we will need for the next section and also prove the claim stating that Assumption \ref{closeness_of_cij} implies Assumption \ref{assumption_c_ij_positive}. In the following section, we present a boundary version of a corralling result, which we then combine with a calibration argument to complete the proof of the Theorem \ref{isolated_local_minimizer}.

\section{Construction of an admissible domain $\Omega$ and the partition $\ms$ of $\Omega$.}\label{Construct}

In this section, we give an example of a domain $\Omega$ and a partition $\ms$ satisfying Property \ref{property}. At the end of this section, we will show that the geometric objects in fact satisfy \eqref{goodnormals}.  

To begin, let $\tilde{\cone}$ be the {\it simplex cone} and $\tilde{\ms}$ be the partition of $\re^n$ defined in the introduction. Upon restricting, one sees that $\tilde{\cone}$ partitions $B(0,1)$ into $n+1$ chambers with each chamber being $\tilde{S_i} \intersect B(0,1)$. The topological boundary of any of these chambers $\pa(\tilde{S_i} \intersect B(0,1))$ consists of $n$ many $(n-1)$-dimensional hyperplanes and a portion of $\partial B(0,1)$. Further, $\tilde{\cone} \cap \partial B(0,1)$ consists of $0$-dimensional points, $1$-dimensional segments of a curve, $2$-dimensional pieces of a surface, and so on, up to parts of $(n-2)$-dimensional hypersurfaces. Away from the origin, consider a small open ball in the interior of $\Omega$ near these $0$-dimensional points that lie on $\tilde{\cone} \intersect \pa B(0,1)$. Inside this ball, one can consider a smaller open ball - say of radius $\tilde{\eta}$ so that this ball $B_{\tilde{\eta}}$ interacts with the cone $\tilde{\cone}$ in the following way: for all $0<r<\tilde{\eta}$, the open ball $B_r\subset B_{\tilde{\eta}}$ touches the same set of hyperplanes among the $\binom{n+1}{2}$ many of them in $\tilde{\cone}$.

In $B_{\tilde{\eta}}$, the cone $\tilde{\cone}$ resembles a configuration equivalent to a structure where three $(n-1)$-dimensional hyperplanes meet along a line segment. Considering variations of the partition in this neighborhood $B_{\tilde{\eta}}$ and imposing the condition that the first variation of $E_0$ in $B_{\tilde{\eta}}$ should vanish, lead to the balance identity \eqref{gencij}.  

The next step is to introduce \emph{convex dents} on the ball at the locations where the cone $\tilde{\cone}$ meets the boundary of the ball, in such a way as to satisfy \eqref{goodnormals}. To provide intuition, we first look at examples from the lower-dimensional cases: in $\mathbb{R}^2$, a domain shown in \cref{2D_Omega} satisfies \eqref{goodnormals}, and in $\mathbb{R}^3$, a domain that looks like \cref{3D_Omega} satisfies \eqref{goodnormals}. Motivated by these figures, we describe a \emph{denting algorithm} to construct $\partial \Omega$ so that \eqref{goodnormals} is satisfied. The first step of the algorithm is to define the \emph{dents} at all the relevant locations on $\pa B(0,1)$ and later, we join these dents appropriately. For example, in $\mathbb{R}^2$, a dent is illustrated in \cref{2d-dent}, while in $\mathbb{R}^3$, appropriately joined dents are shown in \cref{POI}.  At this point, there will be gaps in between these connected dents. These gaps are then connected by attaching a piece of a smooth $(n-1)$-dimensional surface in a Lipschitz manner. This topologically constructed closed $(n-1)$-dimensional surface is what we call $\pa \Omega$ in this article and the region inside it is referred to as $\Omega$. By putting the dents on the surface of the unit ball sufficiently far away from the origin, one can also impose the condition that $B(0,1/4)$ is compactly contained in the region bounded by this $(n-1)$-dimensional surface. Let $S_{l_k,k}$ be the geometric object where $l_k$ many $k$ dimensional surfaces  each having finite $\h^k$ measure, are joined along their manifold boundaries in a Lipschitz manner. Then $\h^{n-1}(S_{k,l_k}) =0$ for any $0\leq k\leq n-2$ and $\max_{0\leq k \leq n-2}l_k < \infty$. Since \eqref{goodnormals} holding true on $\pa \Omega$ is the same as \eqref{goodnormals} holding true on $\pa\Omega$ outside sets having $\h^{n-1}$ measure zero, one sees that  joining the surfaces in a Lipschitz manner is sufficient.  

For a diffeomorphism $f: A \mapsto f(A)$, we say that $f(A)$ is a \emph{diffeomorphic image} of $A$ if $f(A)$ is smooth and \emph{maximal} in the sense that the image cannot be smoothly extended further. For example, consider the graph of $y = |x|$ for $x \in [-1,1]$. This graph can be split into two diffeomorphic images of a unit line segment:  
\[
l_{\rm left}:= \{y = -x \mid -1 \le x \le 0\} \quad \text{and} \quad l_{\rm right}:=\quad \{y = x \mid 0 \le x \le 1\}.
\]  
Here, the corresponding diffeomorphisms are $f(x) = -x$ on $l_{\rm left}$ and $f(x) = +x$ on $l_{\rm right}$. The origin is a point where the graph is not differentiable, so the line segments $l_{\rm left}$ and $l_{\rm right}$ cannot be smoothly extended through zero on the graph of $y = |x|$. This demonstrates that the two diffeomorphic images of the line segment are indeed maximal.

\begin{figure}
    \centering
    \includegraphics[width=0.3\linewidth]{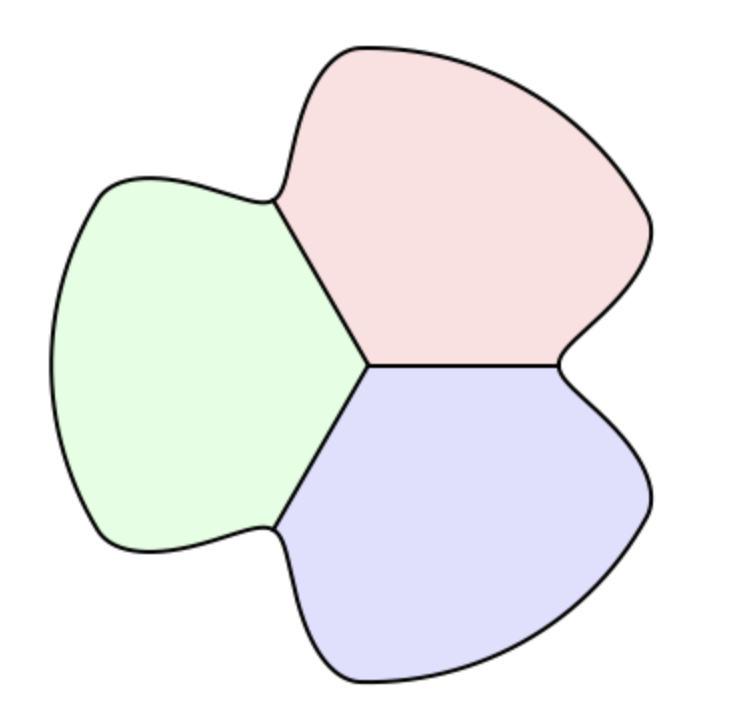}
    \caption{An admissible domain $\Omega \subset \re^2$ that has a triple junction and satisfies \eqref{goodnormals}. }
    \label{2D_Omega}
\end{figure}

\begin{figure}
    \centering
    \includegraphics[width=0.4\linewidth]{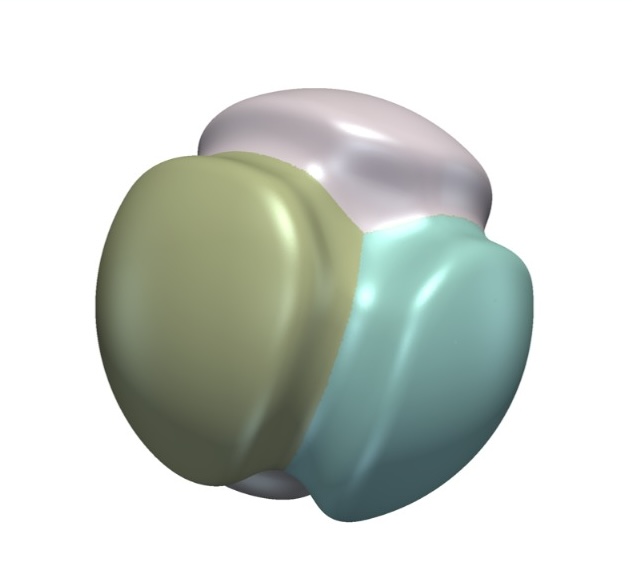}
    \caption{An admissible domain $\Omega \subset \re^3$  satisfying \eqref{goodnormals} that is partitioned by a tetrahedral cone and thereby has a quadruple junction.}
    \label{3D_Omega}
\end{figure}

\begin{figure}
    \centering
    \includegraphics[width=0.3\linewidth]{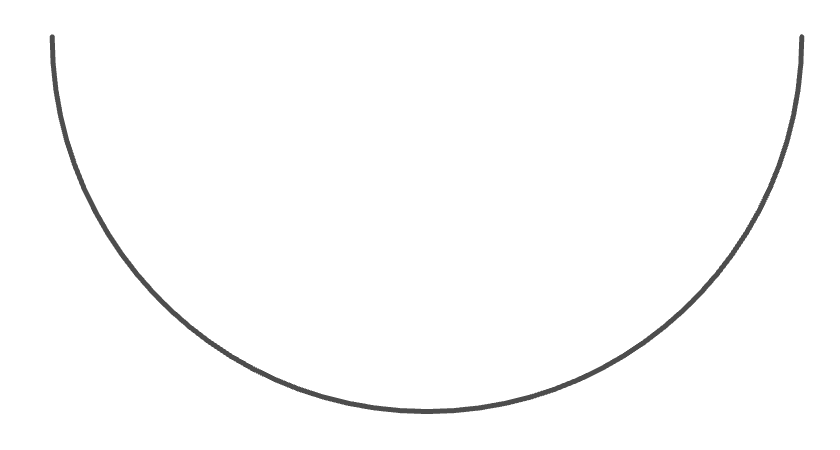}
    \caption{The above is an example of a `dent' in the case of $\re^2$. Attaching three of these `dents' to a triad in \cref{triad} orthogonally and completing the surface, one obtains $\pa \Omega$ in \cref{2D_Omega}.}
    \label{2d-dent}
\end{figure}

\begin{figure}
    \centering
    \includegraphics[width=0.3\linewidth]{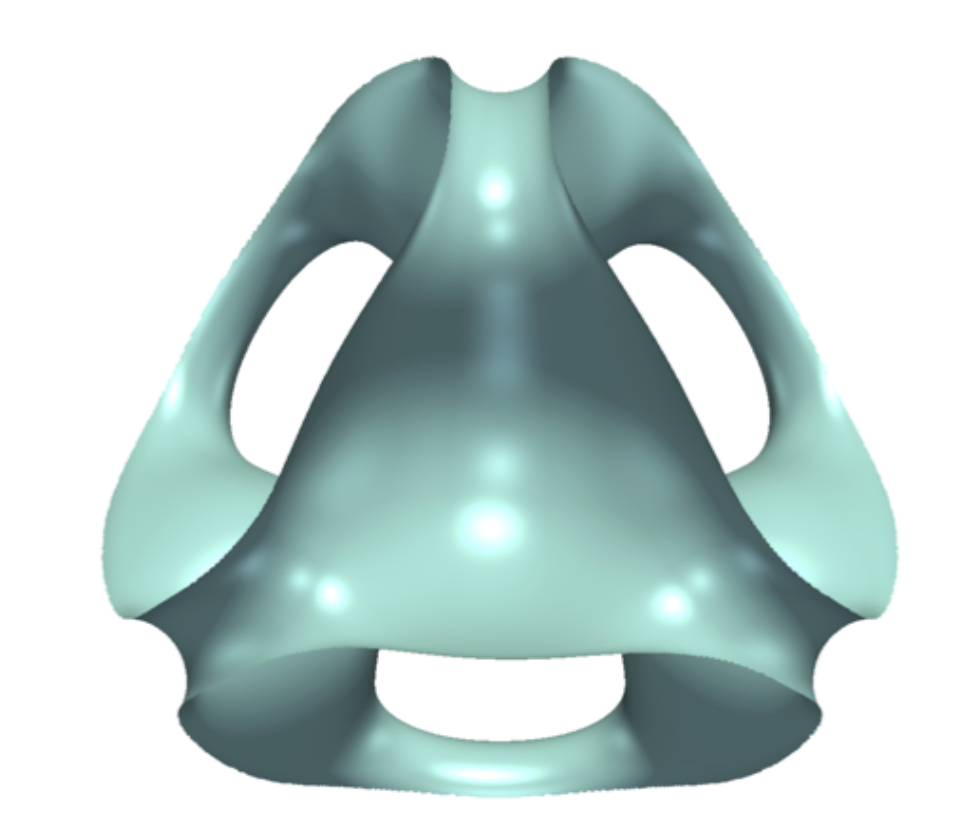}
    \caption{The above troughs and valleys are the `dents' in the case of $\re^3$. Attaching these `dents' to a tetrahedral cone in \cref{Tetrahedral_cone} orthogonally and completing the surface, one gets the $\pa \Omega$ in \cref{3D_Omega}.}
    \label{POI}
\end{figure}

To describe what we mean by dents, we note that $\tilde{\cone} \cap \partial B(0,1)$ consists of diffeomorphic images of $k$-disks for $0 \le k \le n-2$. Here, for $1 \le k \le n-2$, a $k$-disk is defined as
\[
D^k := \left\{ (x_1, x_2, \dots, x_k) \times (0,0,\dots,0) \in \mathbb{R}^k \times \mathbb{R}^{n-k} \ \Big| \ \sum_{i=1}^k x_i^2 \le 1 \right\},
\]
and $D^0$ is taken to be a single point. Fix $k \in \{0,1,\dots, n-2\}$ and suppose that there are $l(k)$ many diffeomorphic images of $D^k$ on $\tilde{\cone} \cap \partial B(0,1)$. Pick a $j \in \{1,2,\dots, l(k)\}$ and by definition, for this choice of $j$ and $k$, there exists a diffeomorphism
\[
h_{j,k}: D^k \mapsto \tilde{\cone} \cap \partial B(0,1)
\]
such that $h_{j,k}(D^k)$ cannot be smoothly extended further on $\partial B(0,1)$. Further, let $P$ be one of the $\binom{n+1}{2}$ many $(n-1)$-dimensional hyperplanes restricted to the closed unit ball. Necessarily, this plane orthogonally meets $\pa B(0,1)$ along $h_{j,k}(D^k)$. For intuition in lower dimensions, in $\re^2$, one can think of $h_{j,k}(D^k)$ as one of the three points (when k = 0) as seen in \cref{2D_Omega} where the triad meets $\partial \Omega$. In $\mathbb{R}^3$, the corresponding $h_{j,k}(D^k)$ may be one of the four points (when $k=0$) or one of the six arcs (when $k=1$) where the tetrahedral cone meets $\partial \Omega$, as illustrated in \cref{3D_Omega}.

Assuming the plane $P$ is  $\{x_{n-1}=0\}$, we will define some model geometries below and later, we will mention some admissible actions that will be used to describe the dents in a neighborhood of the set
$h_{j,k}(D^k) \subset \partial B(0,1)$. Once the admissible actions are defined, we will let $P$ be the plane in $\tilde{\cone}\intersect B(0,1)$ as before and continue the analysis thereafter. To do so, for $k \geq 1$, we
consider a `trough' defined by
\begin{equation}\label{model_geometories}
D^k \times \underline{S}^{\,n-k-1}
:=\left\{(x_1,\dots,x_n)\in\mathbb{R}^n \,\middle|\,
\sum_{i=1}^k x_i^2 \leq 1,\;
\sum_{j=k+1}^{n} x_j^2 = 1,\;
x_n \leq 0
\right\}.
\end{equation}
The geometric objects in \eqref{model_geometories} are also sometimes called model geometries in this article and the notation $\underline{S}^{m}$ denotes the lower hemispherical
subset of the $m$--sphere $S^{m}$. In the particular case when $n=3$ and $k=1$, the trough attached to a plane is depicted in \cref{Plane+trough}, while a lateral view of this configuration is shown in \cref{lateral}. When $k=0$, the corresponding `trough' reduces to a half--sphere, which
we define by
\[
D^0 \times \underline{S}^{\,n-1}
:=\left\{(x_1,\dots,x_n)\in\mathbb{R}^n \,\middle|\,
\sum_{j=1}^{n} x_j^2 = 1,\;
x_n \leq 0
\right\}.
\]

\begin{figure}
    \centering
    \includegraphics[width=0.6\linewidth]{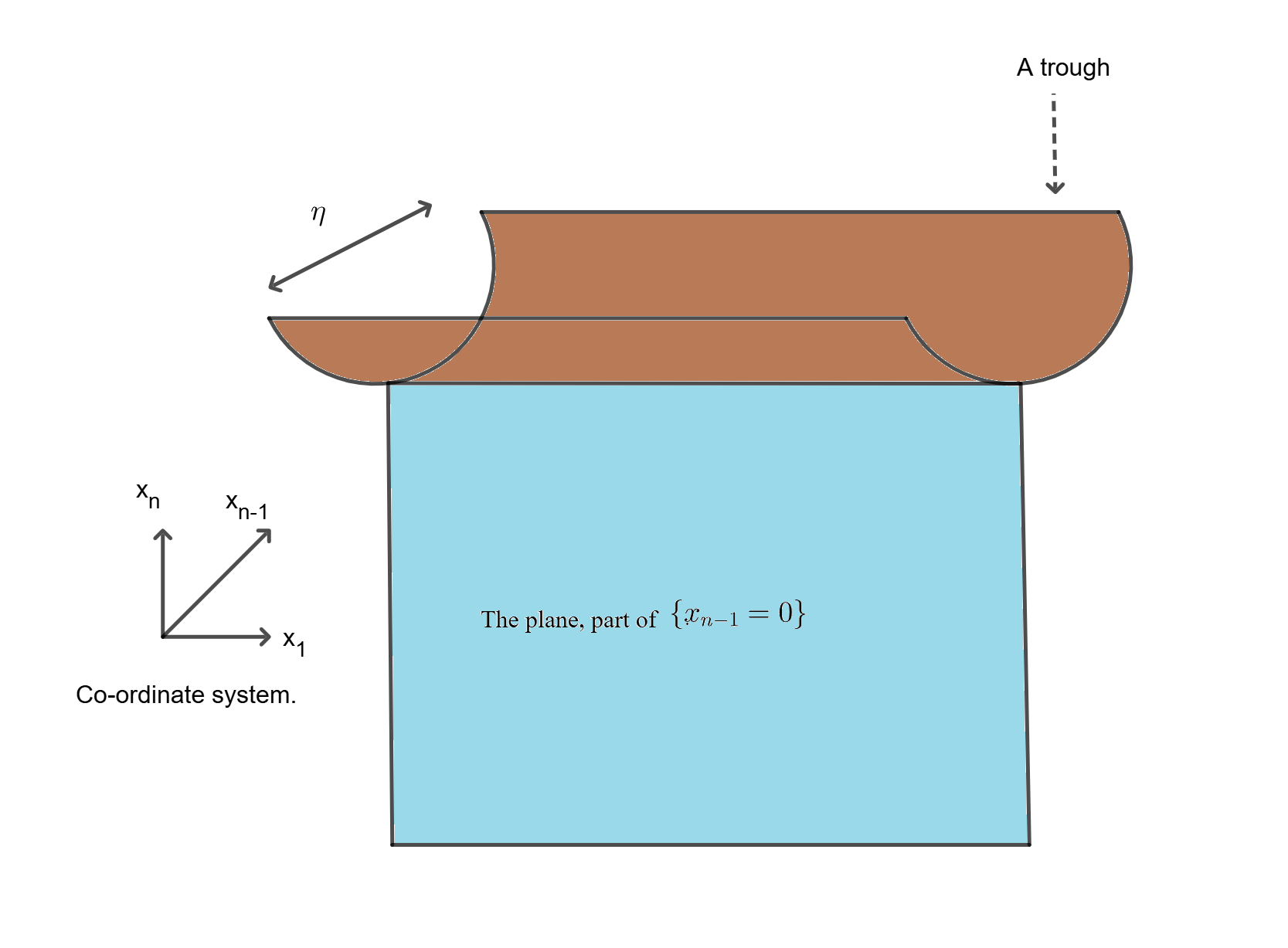}
    \caption{ When $n = 3$ and $k=1$, the trough $D^1 \times \underline{S}^{1}$ is placed orthogonally to the plane that is a part of $\{x_2 = 0\}$. Here $\eta$ is the width of the trough.}
    \label{Plane+trough}
\end{figure}

\begin{figure}
    \centering
    \includegraphics[width=0.6\linewidth]{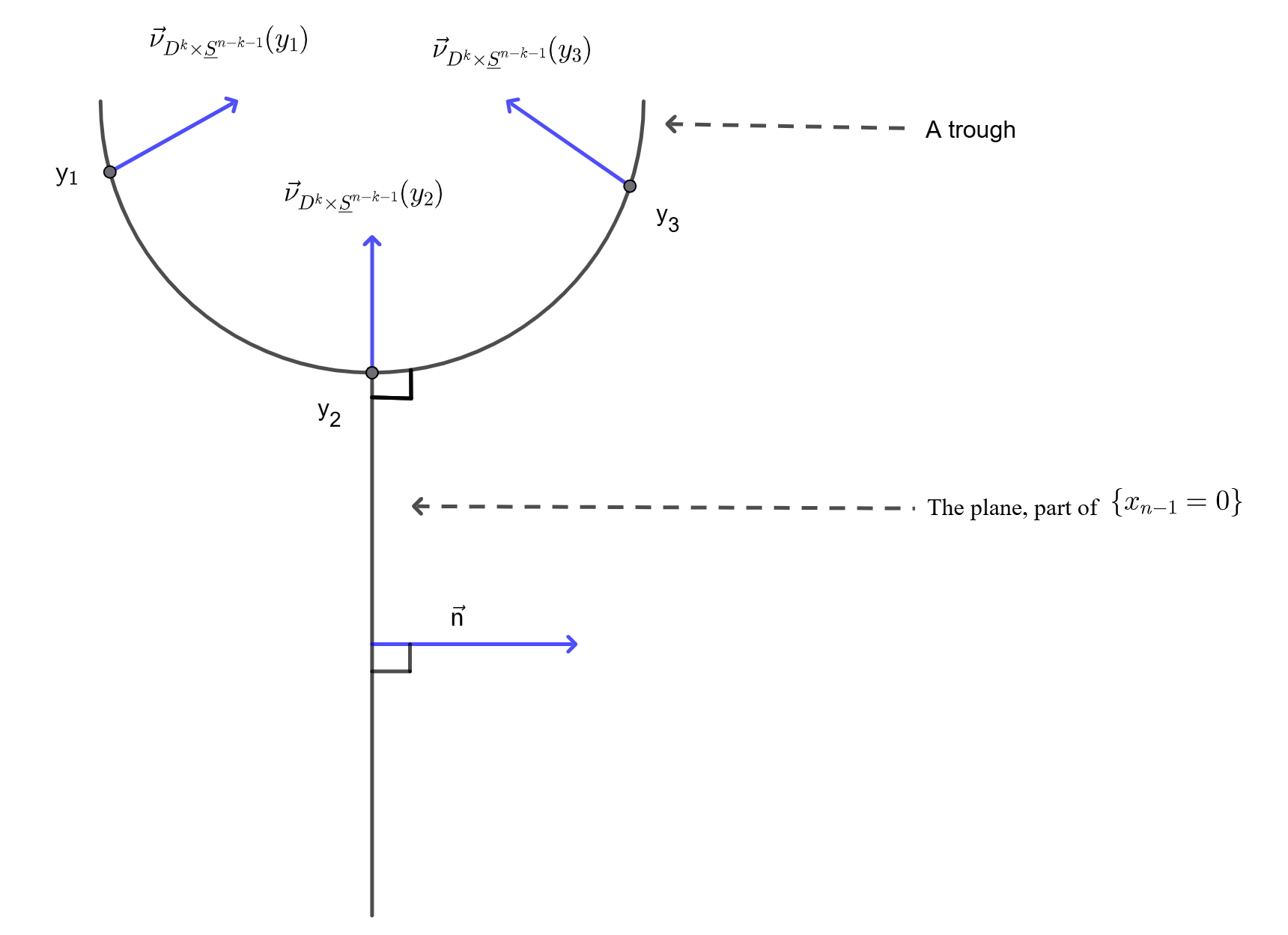}
    \caption{Lateral view of \cref{Plane+trough}. Here $\vec{n}$ is the unit normal to the plane $\{x_{n-1}=0\}$ and $ \vec{\nu}_{D^k \times \underline{S}^{n-k-1}}$ is the unit normal to the trough. The Property \ref{property} or, equivalently \eqref{verify_inequality} is satisfied here: $\vec{n} \cdot \vec{\nu}_{D^k \times \underline{S}^{n-k-1}}(y_1)$ is of one sign, $\vec{n} \cdot \vec{\nu}_{D^k \times \underline{S}^{n-k-1}}(y_3)$ is of the other sign and $\vec{n} \cdot \vec{\nu}_{D^k \times \underline{S}^{n-k-1}}(y_2)=0$. }
    \label{lateral}
\end{figure}
 Lemma \ref{verification} below shows that the set $D^k \times \underline{S}^{\,n-k-1}$ that meets the hyperplane $\{x_{n-1}=0\}$ orthogonally and satisfies \eqref{goodnormals} in Property \ref{property}. Therefore, assuming that \eqref{goodnormals} holds true for all the model geometries in \eqref{model_geometories}, we now describe several operations which we refer to as admissible actions that preserve  \eqref{goodnormals} or equivalently, \eqref{verify_inequality} mentioned below; also see
\cref{Actions}: 
\begin{itemize}
    \item If one translates the set $D^k \times \underline{S}^{\,n-k-1}$
together with the hyperplane $\{x_{n-1}=0\}$, then the translated
configuration continues to satisfy the corresponding inequality
\eqref{verify_inequality}, with the plane $\{x_{n-1}=0\}$ replaced by its
translate and the `trough' replaced by its translated image.
\item One can rotate the entire configuration consisting of
$D^k \times \underline{S}^{\,n-k-1}$ together with the plane $\{x_{n-1}=0\}$ by any
rigid motion of $\mathbb{R}^n$. Such rotations preserve orthogonality
between the trough and the plane, and hence the transformed
configuration again satisfies \eqref{verify_inequality}.
\item One can bend the trough
$D^k \times \underline{S}^{\,n-k-1}$ along the plane $\{x_{n-1}=0\}$ in a
smooth manner and keep the transformed trough orthogonal to
the plane along their intersection. In this case, the corresponding
inequality \eqref{verify_inequality} continues to hold. That is, for a parameter $R_1\geq 0$, one can define the bent trough by
\begin{equation}\label{r_plus_one_stuff}
\left\{
(x_1,\dots,x_n)\in\mathbb{R}^n \left|\sum_{i=1}^k x_i^2 \leq 1, x_n + R_1\sum_{i=1}^kx_i^2 \leq 0,\sum_{j=k+1}^{n-1} x_j^2 + \left(x_n + R_1\sum_{i=1}^kx_i^2\right)^2 = 1.
\right.\right\}.
\end{equation}
Computing the gradient of the identity $\sum_{j=k+1}^{n-1} x_j^2 + \left(x_n + R_1\sum_{i=1}^kx_i^2\right)^2 = 1,$ one sees that the $(n-1)^{th}$ component of the unit normal to the surface described in \eqref{r_plus_one_stuff} is one of $\{- 2x_{n-1}, +2 x_{n-1}\}$ and so, the corresponding conclusion of Lemma \ref{verification} still holds true under this transformation.
\item Instead of the set $D^k \times \underline{S}^{\,n-k-1}$, for
any $R_2>0$ one may consider the stretched trough
\begin{equation}\label{r_stuff}
\left\{(x_1,\dots,x_n)\in\mathbb{R}^n \,\middle|\,
\sum_{i=1}^k x_i^2 \leq R_2,\;
\sum_{j=k+1}^{n} x_j^2 = 1,\;
x_n \leq 0
\right\},
\end{equation}
which corresponds to a stretching in the direction parallel to the plane $\{x_{n-1}=0\}$. This operation also preserves the validity of \eqref{verify_inequality}.
\item\label{shrink} Finally, one can shrink the trough by $R_3$ units:
\begin{equation*}
    \left\{(x_1,\dots,x_n)\in\mathbb{R}^n \,\middle|\,
\sum_{i=1}^k x_i^2 \leq 1,\;
\sum_{j=k+1}^{n} x_j^2 = R_3,\;
x_n \leq 0
\right\}
\end{equation*}
and the corresponding condition \eqref{verify_inequality} is still preserved.
\end{itemize}

\begin{figure}
    \centering
    \includegraphics[width=1.1\linewidth]{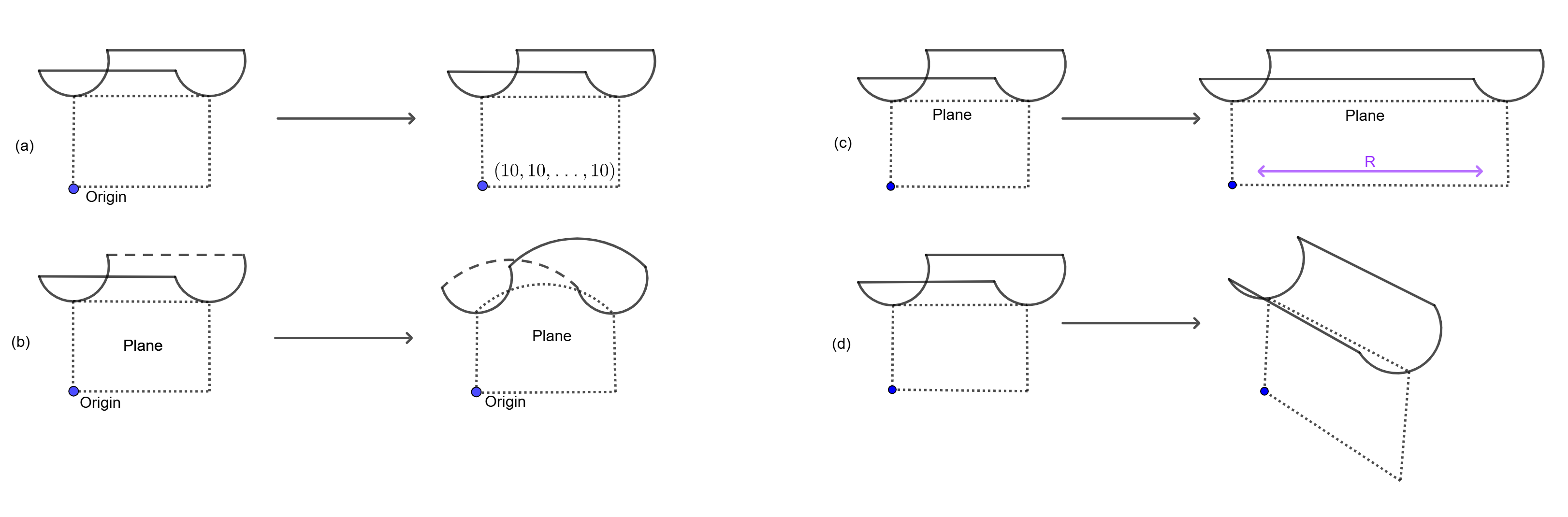}
    \caption{Admissible actions when $n=3,k=1$: (a) Translating the plane and a trough; for example, the origin is moved to $(10,10,\dots,10)$; (b) Bending the trough along the plane; (c) Stretching the trough along the plane by $R$ units; (d) Rotating the trough along with the plane.}
    \label{Actions}
\end{figure}

Using compositions of the admissible actions defined above,
one can construct a map $g_{j,k} : \re^n \mapsto \re^n $ such that the following holds true:
\begin{itemize}
    \item $g_{j,k}(D^k \times (\underbrace{0,0,\dots,0}_{n-k-1\ \text{entries}}))
= h_{j,k}(D^k)$ and
\item the plane $g_{j,k}(\{x \in \re^n \; ; \;  x_{n-1} = 0\})$ contains the plane $P.$
\end{itemize}

We will call the geometric object
$g_{j,k}(D^k \times \underline{S}^{\,n-k-1})$ a \emph{dent}. Now, we are going to use these dents to build $\pa \Omega$. Further, for any $1 \leq k \leq n-2$, one can take $\eta>0$ small so that after considering appropriate values of $R_1, R_2, R_3$ in the admissible actions, one can put a piece of the dent on $\pa B(0,1)$ so that the following holds true:
\begin{itemize}
\item For all $0 \leq k \leq n-2$ and every $1 \leq j \leq l(k)$, one has
\[
 \{ x \in \pa B(0,1) \; ; \; {\rm dist}(x, \tilde{\cone}\intersect \pa B(0,1))
\leq \eta\} \subset g_{j,k}\bigl(D^k \times \underline{S}^{\,n-k-1}\bigr),
\]
and
\[
g_{j,k}\bigl(D^k \times \underline{S}^{\,n-k-1}\bigr) \subset \{ x \in \pa B(0,1) \; ; \; {\rm dist}(x, \tilde{\cone}\intersect \pa B(0,1))
\leq 2\eta\} .
\]

\item For any $0 \leq k \leq n-2$ and any $1 \leq j \leq l(k)$, one has
\begin{equation}\label{gap}
    \begin{aligned}
        g_{j,k}(D^k \times \underline{S}^{\,n-k-1}) \cap \partial B(0,1)\subset
\left\{
y \in \pa B(0,1)
\; \middle| \;
{\rm dist}\!\left(y, h_{m,p}(D^p)\right) \geq 3 \eta, \forall m \neq j, p \neq k.
\right\},
    \end{aligned}
\end{equation}
\item The elements of $\left\{  g_{j,k}(D^k \times \underline{S}^{\,n-k-1}) \; ;\;  j \in \{ 1,2,\dots, l(k)\} \right\}$ are non-empty, for all $k$. 
\item For each $k$, the elements in $\left\{  g_{j,k}(D^k \times \underline{S}^{\,n-k-1}) \; ;\;  j \in \{ 1,2,\dots, l(k)\} \right\}$ are all at a Euclidean distance of   at least $5\eta$ from each other.
\end{itemize}
As a consequence of the admissible actions used in the construction,
one can still assume that the set $g_{j,k}(D^k \times \underline{S}^{\,n-k-1})$ meets the plane $P$
orthogonally along $h_{j,k}(D^k)$. We now provide an algorithm to connect these dents in the small remaining gaps due to \eqref{gap}, in a Lipschitz manner.
Fix an integer $k$ so that $k$ and $k-1$ are in the set $\{1,2,\dots,n-1\}$.
Suppose that on $\tilde{\cone} \cap \partial B(0,1)$ a diffeomorphic image
of a $k$-disk, say $h_1(D^{k})$ meets a diffeomorphic image of a
$(k-1)$-disk, say $h_2(D^{k-1})$.
Correspondingly, by the construction in the previous paragraph, there
exist dents
\[
g_1\bigl(D^{k} \times \underline{S}^{\,n-k-1}\bigr)
\quad\text{and}\quad
g_2\bigl(D^{k-1} \times \underline{S}^{\,n-k}\bigr),
\]
for some diffeomorphisms $g_1$ and $g_2$ such that
$g_1(D^k \times (0,\dots,0)) = h_1(D^{k})$ and $g_2(D^{k-1} \times (0,\dots,0))=h_2(D^{k-1})$. As before, assume that $P$ is one of the $\binom{n+1}{2}$ many $(n-1)$-dimensional hyperplanes
appearing in $\tilde{\cone}$ and that the hyperplane $P$ touches
$g_1\bigl(D^k \times \underline{S}^{\,n-k-1}\bigr)$ and
$g_2\bigl(D^{k-1} \times \underline{S}^{\,n-k}\bigr)$ orthogonally along $h_1(D^{k})$ and $h_2(D^{k-1})$, respectively.
Now, the crucial observation is that the manifold boundary of
$g_1\bigl(D^k \times \underline{S}^{\,n-k-1}\bigr)$
can be visualized as part of the manifold boundary of
$g_2\bigl(D^{k-1} \times \underline{S}^{\,n-k}\bigr)$.

In the gap created by \eqref{gap}, after a possible further translation of the dent
$g_1\bigl(D^k \times \underline{S}^{\,n-k-1}\bigr)$ away from the origin
along the plane $P$ followed by a suitable admissible bending, stretching and shrinking of
this dent along $P$ [all of these actions are admissible], one can attach
$g_1\bigl(D^k \times \underline{S}^{\,n-k-1}\bigr)$
to the manifold boundary of
$g_2\bigl(D^{k-1} \times \underline{S}^{\,n-k}\bigr)$ continuously, or more generally in a Lipschitz manner. Again, since the admissible actions preserve \eqref{goodnormals}, the geometric object one gets after the joining of the dents still satisfies \eqref{goodnormals}.

Additionally, suppose that there is another dent
$g_3\bigl(D^{k} \times \underline{S}^{\,n-k-1}\bigr)$
such that the corresponding $h_3(D^{k})$ meets $h_2(D^{k-1})$ on $\pa B(0,1)$ - note that $h_3(D^k)$ would not necessarily lie on the same $P \intersect \pa B(0,1)$. Then, further performing admissible actions along the hyperplane $P$, one can attach  $g_3\bigl(D^{k} \times \underline{S}^{\,n-k-1}\bigr)$ to $g_2\bigl(D^{k-1} \times \underline{S}^{\,n-k}\bigr)$ in a similar manner to the way in which $g_1\bigl(D^{k} \times \underline{S}^{\,n-k-1}\bigr)$ was joined to $g_2\bigl(D^{k-1} \times \underline{S}^{\,n-k}\bigr)$. By choosing $\eta>0$ to be even smaller, one can ensure that $g_1\bigl(D^{k} \times \underline{S}^{\,n-k-1}\bigr)$ is a positive distance away from $g_3\bigl(D^{k} \times \underline{S}^{\,n-k-1}\bigr)$. One can use a similar idea to join the dents of the form $g_4\bigl(D^{k-1} \times \underline{S}^{\,n-k}\bigr)$ to $g_5\bigl(D^{k-2} \times \underline{S}^{\,n-k+1}\bigr)$ and so on.

We remark here again that, since the condition \eqref{goodnormals} is required to hold $\mathcal{H}^{n-1}$--a.e.\ on the boundary, it is sufficient to define the domain
in a Lipschitz manner. We now have a surface that consists of dents which are joined in a Lipschitz manner, but still has some gaps outside these dents: this makes the constructed surface not a closed surface yet.
So, outside the collection of these dents, one can connect the already connected
dents in an arbitrarily smooth way, ensuring that the resulting surface remains at
a distance of at least $1/2$ from the origin.
This resulting topologically closed $(n-1)$-dimensional surface is an explicit example of the boundary of an admissible domain $\ \Omega$. The region  inside this constructed surface that contains the origin is an example of an admissible domain $\Omega$. The cone $\tilde{\cone}$ partitions the domain $\Omega$ in a natural way and henceforth, we define
\[
\cone := \tilde{\cone} \cap \Omega,
\]
and the regions of $\Omega$ partitioned by this $\cone$ are denoted by
$S_i$. We will denote the collection of these regions by
\[
\ms := (S_1, S_2, \dots,S_n,S_{n+1}).
\]
The constructed pair $(\Omega$, $\ms)$ above is therefore an explicit example that is admissible for \cref{isolated_local_minimizer}. It remains now to verify that Property \ref{property} is satisfied by the model geometries $D^k \times \underline{S}^{\,n-k-1}$ in the case when $P = \{ x_{n-1}=0\}$. This verification is carried out in the following lemma.

\begin{Lemma}\label{verification}
    Let $k \in \{0,1,\dots, n-2\}$.
In $\R^n$, the geometric object
$D^k \times \underline{S}^{\,n-k-1}$
satisfies the condition \eqref{goodnormals}
with respect to the hyperplane $\{x_{n-1}=0\}$.
That is, let $\vec{n}$ denote the unit normal to the hyperplane $\{x_{n-1}=0\}$
pointing from the half-space $\{x_{n-1}<0\}$ towards $\{x_{n-1}>0\}$.
Then, for every $ x \in D^k \times \underline{S}^{\,n-k-1}$, the following holds:
    \begin{equation}\label{verify_inequality}
    \left\{\begin{aligned}
        &\vec{n}\cdot\vec{\nu}_{D^k \times \underline{S}^{n-k-1}}(x) >0\quad\text{if}\; x_{n-1} <0,\\
        &\vec{n}\cdot \vec{\nu}_{D^k \times \underline{S}^{n-k-1}}(x) = 0 \quad\text{if}\; x_{n-1} = 0,\\
        &\vec{n}\cdot\vec{\nu}_{D^k \times \underline{S}^{n-k-1}}(x) <0\quad\text{if}\; x_{n-1} >0,
    \end{aligned}\right.
    \end{equation}
    where $x_i$ is the $i^{th}$ element of the tuple $x = (x_1,x_2,\dots, x_n)$ and $\vec{\nu}_{D^k \times \underline{S}^{n-k-1}}(x)$ denotes the unit normal to $D^k \times \underline{S}^{n-k-1}$ at $x$ in the direction towards the positive $x_n$ axis.
\end{Lemma}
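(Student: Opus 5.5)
The plan is to compute the unit normal of $D^k \times \underline{S}^{\,n-k-1}$ explicitly as the normalized gradient of a defining function, and then read off its $(n-1)^{\rm th}$ component. First observe that $k\leq n-2$ implies $k+1\leq n-1$. Hence the coordinate $x_{n-1}$ is one of the \emph{spherical} coordinates $x_{k+1},\dots,x_n$ and not one of the flat disk coordinates $x_1,\dots,x_k$. This is the only place where the restriction on $k$ enters, and it is why the hyperplane $\{x_{n-1}=0\}$ cuts the trough transversally along $D^k\times(\text{equator})$ rather than containing a flat direction of it.

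Next, on the relative interior of the trough, write it as the zero set of
\[
G(x_1,\dots,x_n) := \sum_{j=k+1}^{n} x_j^2 - 1,
\]
intersected with $\{\sum_{i=1}^k x_i^2<1,\ x_n<0\}$. Since $\nabla G = 2(0,\dots,0,x_{k+1},\dots,x_n)$ and $\sum_{j=k+1}^n x_j^2=1$ on the surface, the vector $(0,\dots,0,x_{k+1},\dots,x_n)$ is already a unit normal. The two candidate normals are therefore $\pm(0,\dots,0,x_{k+1},\dots,x_n)$. The orientation prescribed in Lemma \ref{verification} (towards the positive $x_n$ axis) selects
\[
\vec{\nu}_{D^k \times \underline{S}^{n-k-1}}(x) = -(0,\dots,0,x_{k+1},\dots,x_n),
\]
because its last component $-x_n$ is positive when $x_n<0$. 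In the case $k=0$ the same computation applies with no disk coordinates, and the trough is the lower unit hemisphere.

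With $\vec n = e_{n-1}$, we get $\vec n\cdot\vec\nu(x) = -x_{n-1}$. This quantity is $>0$, $=0$, or $<0$ exactly according as $x_{n-1}<0$, $x_{n-1}=0$, or $x_{n-1}>0$, which is precisely \eqref{verify_inequality}.

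The only delicate point is the rim $\{x_n=0\}$ (and the lateral edge $\sum_{i\le k}x_i^2=1$). There the prescription ``towards the positive $x_n$ axis'' degenerates, because the last component of the normal vanishes. I would handle this by extending $\vec\nu(x)=-(0,\dots,0,x_{k+1},\dots,x_n)$ continuously from the interior; this is the normal pointing towards the axis $\{x_{k+1}=\dots=x_n=0\}$. With this extension the identity $\vec n\cdot\vec\nu=-x_{n-1}$ persists, so \eqref{verify_inequality} holds up to and including the rim. In any case these edge sets have $\mathcal H^{n-1}$-measure zero, and as remarked in Section \ref{Construct}, \eqref{goodnormals} is only needed $\mathcal H^{n-1}$-a.e. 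I expect no genuine obstacle beyond fixing this orientation convention consistently.
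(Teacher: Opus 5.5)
Your proposal is correct and matches the paper's proof: both take the gradient of $\sum_{j=k+1}^{n}x_j^2-1$ and orient it toward the positive $x_n$ axis, giving $\vec{\nu}=-(0,\dots,0,x_{k+1},\dots,x_n)$ (the hemisphere normal $-x$ when $k=0$), and then read off $\vec{n}\cdot\vec{\nu}=-x_{n-1}$. Your extra remarks are sound refinements the paper leaves implicit: that $k\le n-2$ is what makes $x_{n-1}$ a spherical coordinate, the normalization (the paper writes the unnormalized vector with the factor $2$, which does not affect the sign), and the treatment of the rim as an $\mathcal{H}^{n-1}$-null set.
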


\begin{proof}[Proof of Lemma \ref{verification}]
  Fix $k\geq 1$ and set the unit normal to $\{x_{n-1}=0\}$ pointing from $\{x_{n-1} <0\}$ towards $\{ x_{n-1} > 0\}$ to be
   \[
    \vec{n} = (\underbrace{0,0,\dots, 0}_{n-2 \text{ many}}, 1,0).
   \]
For a surface parametrized by the function $ \sum_{j=k+1}^n x_j^2 -1 = 0$, the normal to this surface pointing towards the positive $x_n$ axis is given by \[
   (\underbrace{0,0,\dots, 0}_{k \text{ many}}, -2x_{k+1}, \dots,-2x_{n-1}, -2x_n).
   \]This shows that $\vec{n}\cdot\vec{\nu}_{D^k \times \underline{S}^{n-k-1}}(x)= -x_{n-1}$. When $k =0$, ${D^k \times \underline{S}^{n-k-1}}$ is just the $(n-1)$-dimensional hemisphere and the corresponding normal would be $-x$. Hence, here $\vec{n}\cdot\vec{\nu}_{D^k \times \underline{S}^{n-k-1}}(x) = -x_{n-1}$. The proof of lemma is complete.
   
\end{proof}

\section{Requirements and a Proposition on the assumptions.}\label{require}
For background on sets of finite perimeter, we refer the reader to e.g., \cite{EvansGareipy,G,Maggi}. In the analysis to follow, we require the following two results.
\begin{Lemma}[cf. \cite{Zeimer}]\label{hyperplane-theorem}
   Let $A \subset \mathbb{R}^n$, $n \ge 2$, be a set of finite perimeter. Suppose there exists a point $x_0 \in \partial^* A$ with the property that, for some cube $Q_0$ centered at $x_0$, the measure-theoretic outward normal to $A$, denoted by $\nu_A(x)$, is constant: $\nu_A(x) = \nu_0$ for all $x \in Q_0 \cap \partial^* A$. Then,
\[
Q_0 \cap \partial^* A = Q_0 \cap \Pi,
\]
where $\Pi$ is the unique hyperplane in $\mathbb{R}^n$ containing $x_0$ with normal vector $\nu_0$.

\end{Lemma}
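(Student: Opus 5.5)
The plan is to show that $\partial^* A\cap Q_0$ is, up to $\mathcal{H}^{n-1}$-null sets, a subset of one hyperplane orthogonal to $\nu_0$. Then the structure theory of sets of finite perimeter will identify it with the full slice $Q_0\cap\Pi$. Rotate and translate so that $x_0=0$ and $\nu_0=e_n$. The first key step is to compute the distributional gradient of $\chi_A$ in $Q_0$. By De Giorgi's structure theorem,
\[
D\chi_A\llcorner Q_0 = -\nu_A\,\mathcal{H}^{n-1}\llcorner(\partial^*A\cap Q_0) = -e_n\,\mathcal{H}^{n-1}\llcorner(\partial^*A\cap Q_0).
\]
Hence $\partial_{x_i}\chi_A=0$ in $\mathcal{D}'(Q_0)$ for $i=1,\dots,n-1$, and $\partial_{x_n}\chi_A\le 0$ as a measure.

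From the first fact, $\chi_A$ restricted to $Q_0$ is a.e. independent of $x'=(x_1,\dots,x_{n-1})$. One can see this by mollifying: the mollified function $\chi_A*\rho_\e$ has vanishing $x'$-derivatives on a slightly smaller cube and converges in $L^1$. So $\chi_A(x',x_n)=g(x_n)$ a.e. in $Q_0$. From the second fact, $g$ is a.e. nonincreasing, and since it takes values in $\{0,1\}$, we get $g=\chi_{(-\infty,t)}$ a.e. on the relevant interval for some $t$.

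Consequently $A\cap Q_0$ coincides up to a null set with the slab $\{x_n<t\}\cap Q_0$, whose reduced boundary in $Q_0$ is exactly $\{x_n=t\}\cap Q_0$. The reduced boundary is determined by the Lebesgue equivalence class, because it is defined through $D\chi_A$. Therefore $\partial^*A\cap Q_0=\{x_n=t\}\cap Q_0$. Finally, $x_0=0\in\partial^*A$ forces $t=0$, which gives $Q_0\cap\partial^*A=Q_0\cap\Pi$.

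The main obstacle is bookkeeping of edge cases rather than anything conceptual. First, $g$ could be identically $0$ or $1$, meaning $t$ lies outside the cube; this case is excluded because $x_0\in\partial^*A$ forces $|D\chi_A|(Q_0)>0$. Second, the passage from vanishing tangential derivatives to independence from $x'$ must be done on the (convex) cube. I would handle the latter via mollification on compactly contained subcubes and then exhaust $Q_0$.
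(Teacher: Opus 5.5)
The paper gives no proof of this lemma. It is quoted from \cite{Zeimer} and used only in the last step of the proof of Theorem~\ref{isolated_local_minimizer}, to pass from $\vec{n}_{i,j}=\vec{\nu}_{i,j}$ a.e.\ to flatness of the interfaces. So there is nothing in the paper to compare your route against; your argument is a correct, self-contained proof along the standard lines.

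Its logic runs as follows:
\begin{enumerate}[(i)]
\item In $Q_0$ one has $D\chi_A=-\nu_0\,|D\chi_A|$.
\item Hence $\chi_A$ depends only on $(x-x_0)\cdot\nu_0$ and is nonincreasing in it.
\item So $A\cap Q_0$ coincides up to a null set with $\{(x-x_0)\cdot\nu_0<t\}\cap Q_0$.
\item The reduced boundary at points of $Q_0$ is read off from $D\chi_A$ on small balls, so it is exactly the flat slice.
\item Finally, $x_0\in\partial^*A$ forces $t=0$.
\end{enumerate}
Your route also makes visible that only the polar decomposition $D\chi_A=-\nu_A|D\chi_A|$, with $|D\chi_A|$ carried by $\partial^*A$, is needed, not the full De Giorgi rectifiability theorem.

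Two details deserve a line each when you write it up.

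\textbf{The cube is tilted after rotation.} Once you rotate so that $\nu_0=e_n$, the cube is generally not axis-parallel. Derive $g'\le 0$ by testing against $\phi(x')\psi(x_n)$ supported in small axis-parallel boxes inside $Q_0$. This gives monotonicity locally, hence on the whole projection interval. Use convexity (so connectedness) of the $x_n$-slices of $Q_0$ to make $g$ globally well defined.

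\textbf{The cube must be open.} You use openness when you say that small balls about points of $Q_0$ only see $D\chi_A$ restricted to $Q_0$. For a closed cube the statement is in fact false at points of $\partial Q_0\cap\Pi$. Take $A$ whose boundary is $\Pi$ inside $Q_0$ but bends at a corner lying on $\partial Q_0$. That corner lies in $\Pi\cap Q_0$ but not in $\partial^*A$, while the normal is still constant on $\partial^*A\cap Q_0$. The open-cube reading is therefore the right one, and your argument covers it.
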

\begin{Proposition}[Relative isoperimetric inequality, cf. \cite{Abhi+Peter}]\label{isoperimetric_theorem}
Let $\Omega \subset \mathbb{R}^n$, $n \ge 2$, be a bounded and connected domain with $C^1$ boundary.  
Then, there exist constants $r_0 = r_0(\Omega) > 0$ and $C_0 = C_0(n, \Omega) > 0$ such that, for any $x_0 \in \partial \Omega$ and any set $A \subset \Omega \cap B(x_0, r_0)$ having finite perimeter, one has
\begin{equation*}
\h^{n-1}( \Omega \intersect \pa^* A) \geq C_0 \Big[ \mathcal{L}^n(A) \Big]^{\frac{n-1}{n}}.
\end{equation*}
\end{Proposition}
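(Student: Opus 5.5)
The plan is a standard three-step argument. First a compactness–contradiction reduction to small sets, then a blow-up at the worst scale, and finally the Euclidean isoperimetric inequality transferred to $\Omega$ through a bi-Lipschitz boundary flattening.

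\textbf{Step 1: cover $\partial\Omega$ by flattening charts.} Since $\partial\Omega$ is compact and $C^1$, we cover it by finitely many balls $B(x_\ell, 2r_1)$, $\ell=1,\dots,N$, with $x_\ell\in\partial\Omega$. In each ball, after a rotation, $\Omega$ is the region above a $C^1$ graph $x_n>\psi_\ell(x')$, with $\psi_\ell(x_\ell')$ and its gradient vanishing there. Shrinking $r_1$ and using uniform continuity of $\nabla\psi_\ell$, we can assume $|\nabla\psi_\ell|\le 1/10$ on the chart. The map $\Phi_\ell(x',x_n)=(x',x_n-\psi_\ell(x'))$ is then bi-Lipschitz with constants close to $1$. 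It sends $\Omega\cap B(x_\ell,2r_1)$ into the half-space $\{y_n>0\}$ and $\partial\Omega$ into $\{y_n=0\}$. We take $r_0$ to be a Lebesgue number of the cover divided by two, say $r_0\le r_1$. Then any ball $B(x_0,r_0)$ with $x_0\in\partial\Omega$ lies in some $B(x_\ell,2r_1)$, so $A$ lies entirely in one chart.

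\textbf{Step 2: transfer to the half-space.} Set $E=\Phi_\ell(A)\subset\{y_n>0\}$. This is a bounded set of finite perimeter, since bi-Lipschitz maps preserve finite perimeter. Lebesgue measure changes by at most a factor $L^n$ and $\h^{n-1}$ of reduced boundaries by at most $L^{n-1}$. Moreover $\Phi_\ell(\Omega\cap\partial^*A)=\{y_n>0\}\cap\partial^*E$ on the chart, because $\Phi_\ell$ maps the reduced boundary to the reduced boundary up to $\h^{n-1}$-null sets.

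\textbf{Step 3: the half-space relative isoperimetric inequality.} We need
\[
\h^{n-1}(\{y_n>0\}\cap\partial^*E)\ge c_n\,\mathcal{L}^n(E)^{\frac{n-1}{n}}.
\]
To prove it, reflect $E$ across $\{y_n=0\}$ to get $\tilde E$. Then
\[
\mathcal{L}^n(\tilde E)=2\,\mathcal{L}^n(E),\qquad P(\tilde E)\le 2\,\h^{n-1}(\{y_n>0\}\cap\partial^*E).
\]
The perimeter bound holds because the trace of $\chi_E$ from above on $\{y_n=0\}$ equals its reflected trace from below, so no jump is created on the hyperplane. Applying the Euclidean isoperimetric inequality to $\tilde E$ gives the bound with $c_n=2^{-1/n}n\omega_n^{1/n}$. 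Combining with Step 2 yields the proposition with $C_0=c_nL^{-(n-1)}L^{-(n-1)}$, which depends only on $n$ and $\Omega$ through the chart constants.

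\textbf{Main obstacle.} The delicate point is the perimeter identity for the reflection in Step 3, that is, showing that the hyperplane carries no perimeter of $\tilde E$. I would handle it with the BV trace theorem: for $u\in BV(\{y_n>0\})$, the even extension lies in $BV(\R^n)$, and $|Du|(\{y_n=0\})=\int|u^+-u^-|\,d\h^{n-1}=0$. The remaining bookkeeping of bi-Lipschitz distortion of $\h^{n-1}$ on reduced boundaries is routine.
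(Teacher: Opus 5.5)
Your proof is correct. The paper gives no proof of this proposition; it only quotes it from \cite{Abhi+Peter}, so there is no in-paper argument to match.

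The connectedness hypothesis suggests the cited result is presumably derived from the global relative isoperimetric inequality on a connected bounded Lipschitz domain,
$P(A;\Omega)\geq C\min\{\mathcal{L}^n(A),\mathcal{L}^n(\Omega\setminus A)\}^{\frac{n-1}{n}}$. One would apply it after choosing $r_0$ so small that $\mathcal{L}^n(B(x_0,r_0))\leq\frac12\mathcal{L}^n(\Omega)$, which forces the minimum to be $\mathcal{L}^n(A)$. That deduction is two lines. However, it rests on the BV Poincar\'e inequality, which is proved by Rellich compactness and contradiction, so the constant is not explicit and connectedness is essential.

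Your local route (flatten, reflect evenly, apply the sharp Euclidean isoperimetric inequality) buys three things:
\begin{itemize}
\item it avoids compactness altogether;
\item it shows that connectedness is superfluous for this local statement;
\item it gives an explicit constant. By shrinking the charts so that $\sup|\nabla\psi_\ell|\to 0$, this constant can be made arbitrarily close to the half-space value $2^{-1/n}n\omega_n^{1/n}$.
\end{itemize}
You can also sharpen the bookkeeping: $\det D\Phi_\ell\equiv 1$, so volume is preserved exactly and $C_0=c_nL^{-(n-1)}$ already works.

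A few clean-ups:
\begin{itemize}
\item Your opening sentence announces a compactness--contradiction and blow-up argument that you never carry out; remove it.
\item Make sure each $B(x_0,r_0)$ lies compactly inside a single chart. For example, cover $\partial\Omega$ by balls $B(x_\ell,r_1)$ with charts valid on $B(x_\ell,3r_1)$, and take $r_0\leq r_1$.
\item Justify $\Phi_\ell(\partial^*A)=\partial^*E$ modulo $\mathcal{H}^{n-1}$-null sets via Federer's theorem. Bi-Lipschitz maps preserve points of density $0$ and $1$, hence essential boundaries, and they preserve $\mathcal{H}^{n-1}$-null sets. The same observation shows that $E$ has finite perimeter.
\end{itemize}
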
 

Now, we show that the claim on the assumptions we made earlier is true in the following proposition.
\begin{Proposition}\label{propn_asu}
    In the case when $n \geq 3$, Assumption \ref{closeness_of_cij} implies Assumption \ref{assumption_c_ij_positive}.
\end{Proposition}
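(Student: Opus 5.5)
The plan is to compute the coefficient matrix of the quadratic form $F$ in \eqref{quadratic_form} explicitly, using $c_{ij}=c_i+c_j$ for $i\neq j$ and $c_{ii}=0$, and to reduce positive definiteness to one scalar inequality via a rank-two determinant computation.

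\textbf{Step 1: the coefficients of $F$.} For $i\neq k$ in $\{2,\dots,n+1\}$,
\[
\tfrac12\bigl(c_{1i}^2+c_{1k}^2-c_{ik}^2\bigr)=c_1^2+c_1c_i+c_1c_k-c_ic_k .
\]
The diagonal coefficient is $\tfrac12\cdot 2c_{1i}^2=(c_1+c_i)^2$, which exceeds the value of the off-diagonal formula at $k=i$ by $2c_i^2$. Writing $S=\sum_i x_i$ and $T=\sum_i c_ix_i$, this gives
\[
F(x)=(c_1S+T)^2-2T^2+2\sum_{i=2}^{n+1}c_i^2x_i^2 .
\]

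\textbf{Step 2: change of variables.} Set $y_i=c_ix_i$; this is legitimate since all $c_i>0$. Put $a_i=1+c_1/c_i$ and $\mathbf 1=(1,\dots,1)$. Then
\[
F=2|y|^2-2(\mathbf 1\cdot y)^2+(a\cdot y)^2,
\]
so $F$ is represented by the symmetric $n\times n$ matrix $M=2I-2\,\mathbf 1\mathbf 1^T+aa^T$.

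\textbf{Step 3: reduce to a determinant.}
\begin{itemize}
\item The matrix $2I-2\,\mathbf 1\mathbf 1^T$ has exactly one negative eigenvalue, in the direction $\mathbf 1$. Adding the positive semidefinite rank-one matrix $aa^T$ can only raise eigenvalues, so $M$ has at most one non-positive eigenvalue.
\item Hence $M$ is positive definite if and only if $\det M>0$.
\item By the matrix determinant lemma with $U=[\mathbf 1\ a]$ and $C=\mathrm{diag}(-2,1)$,
\[
\det M=2^{n}\Bigl[(1-n)\bigl(1+\tfrac12|a|^2\bigr)+\tfrac12 A^2\Bigr],\qquad A=\sum_i a_i .
\]
\end{itemize}
So Assumption \ref{assumption_c_ij_positive} is equivalent to
\[
A^2>(n-1)\bigl(2+|a|^2\bigr).
\]
Decompose $a_i=\bar a+d_i$ with $\sum_i d_i=0$. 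The condition then becomes
\[
n\bar a^2>2(n-1)+(n-1)|d|^2 .
\]
This is a lower bound on the mean of the $a_i$ against their spread.

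\textbf{Step 4: use Assumption \ref{closeness_of_cij} (main obstacle).}
\begin{itemize}
\item \emph{Mean.} With $j=1$, \eqref{cijpospos} gives $c_1/c_i>(n-2)/n$, hence $a_i>(2n-2)/n$ and $n\bar a^2>4(n-1)^2/n$.
\item \emph{Spread.} Applying \eqref{cijpospos} to pairs $i,j\geq 2$ gives $c_i/c_j\in\bigl(\tfrac{n-2}{n},\tfrac{n}{n-2}\bigr)$. This bounds the ratios $a_i/a_j$ and therefore $|d|^2$ in terms of $\bar a$.
\end{itemize}

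The delicate part is balancing these two effects so the final inequality closes exactly at the threshold $(n-2)/n$. If the crude estimate fails, I would first normalise $c_1=1$. I would then minimise the function $n\bar a^2-(n-1)|d|^2$ over the box of admissible ratios. Since this function is a difference of quadratics in the $t_i=c_1/c_i$, its minimum should occur at extreme points of the box, where each $c_i$ equals one of its two bounds. That reduces the claim to finitely many explicit one-parameter checks; the hypothesis $n\geq 3$ is needed there for the factor $(n-2)/n$ to be positive.
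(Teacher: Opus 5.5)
Your Steps 1--3 are correct. They reach the paper's scalar inequality by a different route. The paper uses Cauchy--Schwarz in the $H_1$-inner product together with Sherman--Morrison to show that failure of positive definiteness forces \eqref{star}. Your interlacing argument plus the rank-two determinant lemma gives the sharper statement that Assumption \ref{assumption_c_ij_positive} holds if and only if $A^2>(n-1)(2+|a|^2)$. Since $n\bar a^2-(n-1)|d|^2=A^2-(n-1)|a|^2$, what remains is exactly the paper's claim $\Phi(a)>2(n-1)$.

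That is the gap: Step 4 is where Assumption \ref{closeness_of_cij} is actually used, and you do not prove it. You carry out neither the mean/spread estimate nor the fallback. The fallback also rests on an invalid principle: a difference of quadratics need not attain its minimum over a box at a vertex. For instance, $t_1^2-t_2^2$ on $[-1,1]^2$ is minimised at $(0,\pm1)$, not at a corner.

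The missing argument is short. Put $\alpha=\frac{2(n-1)}{n}$ and $\beta=\frac{2(n-1)}{n-2}$. The bounds $c_1/c_i\in\bigl(\frac{n-2}{n},\frac{n}{n-2}\bigr)$ alone give $a\in[\alpha,\beta]^n$, so the constraints between $c_i$ and $c_j$ for $i,j\ge 2$ are never needed. The function $\Phi(a)=A^2-(n-1)|a|^2$ satisfies $\partial^2\Phi/\partial a_i^2=2-2(n-1)=4-2n<0$, so it is concave in each coordinate separately. That, not its being a difference of quadratics, lets you move each coordinate to an endpoint without increasing $\Phi$. Hence the minimum over the box is attained at a vertex. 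If $m$ coordinates equal $\beta$ and the other $n-m$ equal $\alpha$, then
\[
\Phi=(n-m)\alpha^2+m\beta^2-m(n-m)(\beta-\alpha)^2=\frac{4(n-1)^2}{n^2(n-2)^2}\Bigl[n(n-2)^2+4m(m-1)\Bigr]\ge\frac{4(n-1)^2}{n}>2(n-1),
\]
where the last inequality is equivalent to $n>2$. This is how the paper concludes. It also shows that $n\ge 3$ is used for the coordinatewise concavity, for the finiteness of $\beta$, and for the final strict inequality, not merely for positivity of $(n-2)/n$. With this step supplied, your proof is complete.
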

\begin{proof}[Proof of Proposition \ref{propn_asu}]\footnote[2]{Portions of this proof were suggested by ChatGPT.}
    Assume, for contradiction, that Assumption \ref{closeness_of_cij} holds and Assumption \ref{assumption_c_ij_positive} is false. Then, there is a vector $x:= (x_2,\dots, x_{n+1}) \in \re^n$ so that $F(x) \leq 0$. 
    Now, expanding $F$ in \eqref{quadratic_form} in terms of $c_i$, one gets
    \begin{equation}\label{Fpostive}
    0 \geq F(x_2,\dots, x_{n+1}) = \sum_{i,j=2}^{n+1}[c_1^2 + c_1(c_i + c_j) - c_ic_j] \;x_ix_j=: x^T(D\tilde{H}D)\;x ,
    \end{equation}
    where $A^T$ denotes the transpose of $A$ and the matrix $D := {\rm diagonal}(c_2, c_3,\dots, c_{n+1})$ is an $n\times n$ matrix whose $(i,i)^{th}$ entry is $c_{i+1}$ and $0$ elsewhere. Here, the $(i,j)^{th}$ entry of the $n\times n$ matrix $\tilde{H}$ is given by
    \[
    \left(\frac{c_1+c_{i+1}}{c_{i+1}}\right)\left(\frac{c_1 + c_{j+1}}{c_{j+1}}\right) -2 (1 - \delta_{ij}),
    \]

where $\delta_{ij}$ is the Kronecker delta function. So, the condition \eqref{Fpostive} is equivalent to saying $y^T\tilde{H}y\leq 0$ for $y := Dx$. Letting $\omega_i := \frac{c_1 + c_i}{c_i}$, the matrix $\tilde{H}$ turns out to be 
\[
\tilde{H} = H_1 - 2\; \mathbbm{1}\mathbbm{1}^T, \; \text{ where } \; H_1:= 2I + \omega^T \omega,
\]
where $\omega := (\omega_2,\dots, \omega_{n+1})\in \re^n$ and $\mathbbm{1} := (1,1,\dots,1) \in \re^n$.  So, $y^T\tilde{H}y \leq 0$ is equivalent to saying
\begin{equation}\label{cstoapply}
    \frac{1}{2}y^T H_1 y \leq ( \mathbbm{1} \cdot y )^2.
\end{equation}

By the definition of $H_1$, one sees that $H_1$ is positive definite and symmetric. So, $H_1^{\pm\frac{ 1}{2}}$ exist, and these matrices are also symmetric. Indeed, by the spectral theorem, there exist an orthogonal matrix $Q$ and a set of positive eigenvalues $\{\lambda_1,\lambda_2,\dots,\lambda_n\}$ so that 
\[
H_1 = Q^T \cdot \text{\rm diag}(\lambda_1,\lambda_2,\dots,\lambda_n)\cdot Q.
\]
Thus, explicitly, one has
\[
H_1^{\pm \frac{1}{2}} = Q^T \cdot \text{\rm diag}(\lambda_1^{\pm \frac{1}{2}},\lambda_2^{\pm \frac{1}{2}},\dots,\lambda_n^{\pm \frac{1}{2}})\cdot Q.
\]
Moreover, one has 
\[
H_1^{-\frac{1}{2}}H_1^{+\frac{1}{2}}= {\rm Id}_n, 
\]
where Id$_n$ is the  $n\times n$ identity matrix. Applying the Cauchy-Schwarz inequality to \eqref{cstoapply}, one obtains
\begin{equation}\label{csapplied}
    \begin{aligned}
        \frac{1}{2}y^T H_1 y\leq (\mathbbm{1} \cdot y )^2 
        = (\mathbbm{1}^Ty)^2 
        &= \left[ \mathbbm{1}^T H_1^{-\frac{1}{2}}H_1^{+\frac{1}{2}} y\right]^2\\
         &= \left[ \left(H_1^{-\frac{1}{2}}\mathbbm{1}\right)^T H_1^{+\frac{1}{2}} y\right]^2\\
        &= \left[H_1^{-\frac{1}{2}} \mathbbm{1} \cdot H_1^{+\frac{1}{2}} y\right]^2 
        \\
        &\leq \left| H_1^{-\frac{1}{2}} \mathbbm{1}\right|^2 \left| H_1^{+\frac{1}{2}} y\right|^2\\
        &= \left[ \left( H_1^{-\frac{1}{2}} \mathbbm{1}\right)^T\left( H_1^{-\frac{1}{2}} \mathbbm{1}\right)\right]\left[\left( H_1^{+\frac{1}{2}} y\right)^T\left( H_1^{+\frac{1}{2}} y\right)\right]
        \\ &= [ \mathbbm{1}^T H_1^{-1} \mathbbm{1}]\;[ y^T H_1 y].
    \end{aligned}
\end{equation}
Noting that $H_1$ is positive definite, one gets
\begin{equation}\label{before_SM}
    \frac{1}{2} \leq\mathbbm{1}^T H_{1}^{-1} \mathbbm{1}.
\end{equation}
The Sherman-Morrison formula (cf. \cite[Theorem 2.3.10]{matrixbook}) gives 
\[
H_{1}^{-1} = \frac{1}{2}\; {\rm Id}_n - \frac{\omega\omega^T}{2(2+|\omega|^2)}.
\]
And so, one has
\[
\mathbbm{1}^T H_1^{-1}\mathbbm{1} = \frac{n}{2} - \frac{\left(\sum_{i=2}^{n+1} \omega_i\right)^2}{2\left(2+\left(\sum_{i=2}^{n+1} \omega_i^2\right)\right)}.
\]
Using this estimate in \eqref{before_SM}, one gets that
\begin{equation}\label{star}
\left(\sum_{i=2}^{n+1} \omega_i\right)^2 - (n-1) \left(\sum_{i=2}^{n+1} \omega_i^2\right)\leq 2(n-1).
\end{equation}
Now, Assumption \ref{closeness_of_cij} implies that for every $i = 2,3,\dots, n+1$, one has
\[
\frac{n-2}{n} < \frac{c_1}{c_i} < \frac{n}{n-2}.
\]
Or equivalently, using  $\omega_i = \frac{c_1 + c_i}{c_i}$, one has
\[
\frac{2(n-1)}{n}= 1 + \frac{n-2}{n}< \omega_i < 1 + \frac{n}{n-2} = \frac{2(n-1)}{n-2}.
\]
Based on \eqref{star}, for $\tau := (\tau_2,\dots, \tau_{n+1})\in \re^n$, we define a new function to analyze:
\[
\Phi(\tau) := \left( \sum_{i=2}^{n+1} \tau_i\right)^2 - (n-1) \sum_{i=2}^{n+1} \tau_i^2.
\]
We now claim that for all $\tau$ in the $n$-box $\left[ \frac{2(n-1)}{n}, \frac{2(n-1)}{n-2}\right]^n$, one has
\begin{equation}\label{claim}
\Phi(\tau) >2(n-1),
\end{equation}
which clearly contradicts \eqref{star}. We now complete the proof of the Proposition \eqref{propn_asu} by proving the claim leading to \eqref{claim}. Indeed, one sees that $\Phi$ is concave in each variable $\tau_i$ separately,  with all the other variables fixed, since
\[
\left( \frac{\pa}{\pa \tau_i}\right)^2 \Phi(\tau) = 4-2n<0, \; \forall\; i = 2,3,\dots,n+1.
\]
Hence, the minimum of $\Phi$ must occur at a vertex of the $n$-box $\left[ \frac{2(n-1)}{n}, \frac{2(n-1)}{n-2}\right]^n$. So, in this $n$-box, assume that $\Phi$ attains its minimum in the $n$-box when $\tilde{l}$ many $\tau_i$ are $\frac{2(n-1)}{n}$ and that $n-\tilde{l}$ many $\tau_i$ are $\frac{2(n-1)}{n-2}$. Then, the minimum  of $\Phi$ in the $n$-box,x denoted by $\Phi_{min}$ is
\[
\Phi_{min} = \left[ \tilde{l} \left(\frac{2(n-1)}{n}\right) + (n-\tilde{l})\left(\frac{2(n-1)}{n-2}\right) \right]^2- (n-1) \left[ \tilde{l} \left(\frac{2(n-1)}{n}\right)^2 + (n-\tilde{l} )\left(\frac{2(n-1)}{n-2} \right)^2\right]
\]
Simplifying the above expression and noting that for any $\tilde{l} \in\{1,2,\dots,n\}$, the quantity $(n-\tilde{l} )(n-\tilde{l} -1) \geq0$, one obtains:
\begin{equation}\label{propn_concluding}
\begin{aligned}
\Phi_{min} 
&= \frac{4(n-1)^2}{n^2(n-2)^2} \left[ n (n-2)^2+4(n-\tilde{l} )(n-\tilde{l} -1)\right]\\
&\geq \frac{4(n-1)^2}{n^2(n-2)^2} \left[n (n-2)^2\right] = \frac{4(n-1)^2}{n} > 2(n-1). 
\end{aligned}
\end{equation}
The proof of the proposition is now complete.
\end{proof}
\begin{Remark}
   The converse of Proposition \ref{propn_asu} is false: Assumption \ref{assumption_c_ij_positive} does not always imply Assumption \ref{closeness_of_cij}. Indeed, one can consider the specific set of values for the constants $c_i$ as follows: $c_2=1, c_3 = c_4 = \dots = c_{n+1}=0$ and $c_1$ is such that $c_1^2 + 2c_1 - 1 >0$. Then, in this case, the quadratic form turns out to be
    \[
    F(x_2,\dots,x_{n+1}) = [ c_1^2 + 2c_1 - 1] x_2^2 \geq 0 \; , \; \text{ for any }(x_2,\dots,x_{n+1}) \in \re^n.
    \]
  So, Assumption \ref{assumption_c_ij_positive} holds true. However, one sees that
    \[
    \frac{c_k}{c_1}= 0 < \frac{n-2}{n}\; , \; \text{ for any }k \in \{2,3,\dots, n+1\}.
    \]
    which shows that Assumption \ref{closeness_of_cij} is not true in this case. 
\end{Remark}

\section{Proof of the main theorem \ref{isolated_local_minimizer}.}
Fix a domain $\Omega \subset \R^n$ and the partition of it given by $\ms = (S_1,S_2, \dots, S_{n+1})$ such that they satisfy Property \ref{property}. For example, one can consider $\Omega$ and $\ms$ as defined in Section \ref{Construct} above. The goal now is to show that Theorem \ref{isolated_local_minimizer} holds true for this domain $\Omega$ and the partition $\ms$ of it. Proceeding further, for this configuration of $\Omega$ and $\ms$, for any $\delta>0$, we look at the following constrained partition problem:
\begin{equation}\label{conprob}
    \inf \left\{E_0(\mt,\Omega)\; | \;\mt\in\mathcal{A}^\delta\right\}, 
\end{equation}
where the admissible class $\mathcal{A}^{\delta}$ is given by \begin{equation}\label{admissible_class}
    \mathcal{A}^\delta := \{ \mt:=( T_1,T_2,\dots,T_{n+1}) \text{ is a partition of }\Omega \text{ such that } \lt \left( \mt \Delta \ms\right) \leq \delta\}.
    \end{equation}
As a consequence of the lower semicontinuity property of perimeter under $L^1$-convergence, there exists a minimizer in $\mathcal{A}^{\delta}$ to \eqref{conprob} which we will call
\[\mt^{\delta}= (T_1^\delta,T_2^\delta,\dots,T_{n+1}^\delta).
\]

 At this point, we highlight that the proof of Theorem \ref{lemma1} closely parallels that of \cite[Theorem 3.2]{Abhi+Peter}. In the proof of part \ref{3.2.1} of Theorem \ref{lemma1}, the role of four partitioned regions in \cite[Theorem 3.2 (I)]{Abhi+Peter} is played here by $n+1$ regions and the rest of the proof is completed by induction. For the sake of completeness and to give intuition for part \ref{3.2.2} of Theorem \ref{lemma1}, we briefly mention the proof of part \ref{3.2.1}. Regarding the next part, we recall that the proof of \cite[Theorem 3.2 (III)]{Abhi+Peter} shows that it is not efficient in terms of $E_0$ to have $T_3$ or $T_4$ around $\pa \Omega$ where $\pa S_1$ meets $\pa S_2$. The same idea is used below in an inductive way to prove part \ref{3.2.2} of Theorem \ref{lemma1}. 

\begin{Theorem}[see also \cite{Abhi+Peter}, \cite{Maggi}, \cite{Leonardi}]\label{lemma1}
 Let $\eta = \eta(\Omega)$ be the number defined in Property \ref{property}. Then, there exists a number $\delta = \delta(\cone,\Omega)>0$ such that the following conditions hold for the partition $\mt^\delta$:
\begin{enumerate}[(I)]
    \item\label{3.2.1} \label{thm4.1}For each $i \in \{ 1,2,\dots, n+1\}$, one has:
    \[
    \h^{n-1}\bigg(\Big\{ x \in \big( \bigcup_{\substack{j\in\{1,2,\dots, n+1\},\\j\not=i}} T_j^\delta\big) \intersect \pa \Omega \intersect \pa S_i \; ; \; {\rm dist}(x,\cone) \geq \eta \bigg\} \bigg) = 0.
\]
    \item \label{3.2.2}
     For any $k \in \{2,3,\dots,n\}$, the $\h^{n-1}$ measure of the set
    \begin{equation*}
   \left\{  
    \begin{aligned}
        & x \in  \intersect_{i_j \in I_k} \pa S_{i_j} \intersect \pa \Omega \; \text{\rm such that } \\
        &{\rm dist}(x,\mathcal{C}) < \eta\; \text{\rm and }  {\rm dist}(x, \cup_{i_j \notin I_k} \pa S_{i_j}) > \frac{\eta}{2}
    \end{aligned} \right\} \bigcap \left(\cup_{i_j \notin I_k}  T_{i_j}^{\delta}\right)
    \end{equation*}
    is zero, where $I_k := \{i_1, i_2, \dots, i_{k-1}, i_k\}$ with distinct $i_j \in \{ 1,2,\dots,n,  n+1\}$.
\end{enumerate}

\end{Theorem}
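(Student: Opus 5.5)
The plan is to argue by contradiction via local cut-and-paste competitors near $\pa\Omega$, following \cite[Theorem 3.2]{Abhi+Peter}. The smallness of $\delta$ enters through the relative isoperimetric inequality, Proposition \ref{isoperimetric_theorem}.

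\textbf{Part \ref{3.2.1}.} Fix $i$ and a boundary point $x_0\in\pa\Omega\cap\pa S_i$ with ${\rm dist}(x_0,\cone)\ge\eta$. Near $x_0$, the region $\Omega\cap B(x_0,r)$ lies in $S_i$ for $r<\eta$. Since $\mt^\delta$ is $L^1$-close to $\ms$, the set $A_r:=\big(\bigcup_{j\ne i}T^\delta_j\big)\cap B(x_0,r)$ has measure at most $\delta$. Let $m(r):=\Ln(A_r)$. Consider the competitor obtained by reassigning $A_r$ to $T_i^\delta$. It stays in $\mathcal{A}^\delta$, because it only decreases $\Ln(\mt\Delta\ms)$. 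Its energy changes by at most
\[
-c_{\min}\,\h^{n-1}(\pa^*A_r\cap\Omega\cap B(x_0,r))+C\,\h^{n-1}\big(\pa B(x_0,r)\cap A^{(1)}_r\big).
\]
Here the first term, the removed interior interface, is controlled from below by Proposition \ref{isoperimetric_theorem}, which gives at least $C_0 m(r)^{(n-1)/n}$. The second term is bounded by $m'(r)$. Minimality then yields a differential inequality
\[
m(r)^{(n-1)/n}\le C\,m'(r).
\]
If $m(r_0)>0$ for some small $r_0$, integrating this inequality gives $m(r_0)\ge c r_0^n$. Choosing $\delta<c\,(\eta/2)^n$ produces a contradiction. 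Hence $A_r$ is null for small $r$, so its trace on $\pa\Omega$ vanishes $\h^{n-1}$-a.e. Covering the compact set $\{x\in\pa\Omega\cap\pa S_i:{\rm dist}(x,\cone)\ge\eta\}$ by finitely many such balls gives \ref{3.2.1}.

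\textbf{Part \ref{3.2.2}.} Argue by downward induction on $k$, the number of phases meeting at the boundary stratum. Part \ref{3.2.1} already handles the points lying in a single $\pa S_i$, away from $\cone$. For general $k$, assume the statement holds on all strata where fewer phases meet, that is, away from the deeper junctions. Near a point $x$ of the stratum $\bigcap_{I_k}\pa S_{i_j}\cap\pa\Omega$, the foreign phases $T^\delta_l$ with $l\notin I_k$ are then confined to a small neighbourhood. Take such a foreign set $A:=T^\delta_l\cap B(x,r)$ and compare it with the competitor that reassigns $A$ to the phase $i_j\in I_k$ whose region $S_{i_j}$ contains most of $A$, or whichever is cheapest. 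The key energy inequality is that the interface removed, weighted by $c_{l,i_{j'}}=c_l+c_{i_{j'}}$, dominates the interface created, weighted by $c_{i_j i_{j'}}$.

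\textbf{Main obstacle.} This weight comparison is the step I expect to be the main obstacle. I would try the following. The boundary of $A$ inside $\Omega$ splits into pieces facing the $k$ phases of $I_k$. Averaging over the choice of the receiving phase $i_j\in I_k$, the cost of reassigning is at most
\[
\tfrac{1}{k}\sum_{j}\sum_{j'\neq j}c_{i_ji_{j'}}\,\h^{n-1}(\cdot)\le \tfrac{k-1}{k}\big(\textstyle\sum c_{i_{j'}}\big)+\dots
\]
The gain is $\sum_{j'}(c_l+c_{i_{j'}})\h^{n-1}(\cdot)$. The assumption $c_l>\frac{n-2}{n}c_{i}$ from \eqref{cijpospos}, with $k\le n$, should make the gain strictly larger. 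Combined with the isoperimetric argument, this gives an inequality of the form
\[
\theta\,m^{(n-1)/n}\le C m'
\]
with $\theta>0$, and then the same ODE contradiction as in Part \ref{3.2.1} once $\delta$ is small. Property \ref{property} is used here to ensure that the boundary trace does not add cost: the normals $\vec n_{i,j}\cdot\vec\nu_{\pa\Omega}\ge0$ near $\pa\Omega$ keep the dents on the correct side. The induction closes at $k=n$, and $\delta$ is taken as the minimum over the finitely many steps.
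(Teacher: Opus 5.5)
\textbf{Part (I).} This is essentially the paper's argument: relabel the foreign phases in $B(x_0,s)$ as $T_i$, apply the relative isoperimetric inequality to get $m^{(n-1)/n}\le C\,m'$, and use the smallness of $\delta$. One small fix: the lower bound $m(r)\ge c\,r^n$ comes from integrating on an interval where $m>0$ throughout, i.e.\ it holds for $x_0$ in the support of the foreign phases.

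\textbf{Part (II).} There is a genuine gap, at exactly the step you flag. You leave the weight comparison as ``should'', and the uniform averaging over receiving phases that you propose cannot deliver it under Assumption \ref{closeness_of_cij}. Take a configuration in which $\Omega\cap\pa^*A\cap B(x,r)$ lies entirely on the interface with $T_{i_1}$, with measure $P$. Reassigning $A$ removes interface of cost $(c_l+c_{i_1})P$, while your averaged cost is $\frac1k\sum_{j\ne 1}(c_{i_j}+c_{i_1})P$. Now take $k=n$, $c_l=c_{i_1}=m$ and $c_{i_j}=M$ for $j\ge 2$. The gain exceeds the average iff $(n+1)m>(n-1)M$, i.e.\ iff $m/M>\frac{n-1}{n+1}$. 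Assumption \ref{closeness_of_cij} only gives $m/M>\frac{n-2}{n}$, which is strictly weaker. For $n=3$, averaging reproduces exactly the old hypothesis \eqref{asu_old} that this paper is weakening. (In this configuration the maximal-face choice $i_1$ trivially wins, so the loss comes from averaging.)

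Two further points in Part (II):
\begin{itemize}
\item Your induction does not guarantee that $\pa^*A$ faces only phases of $I_k$. Other foreign phases $T_{l'}$ with $l'\notin I_k$ may sit in the same ball.
\item Choosing the receiver by where the volume of $A$ lies is irrelevant, because the energy sees only interface areas.
\end{itemize}

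\textbf{The missing idea.} Choose the receiver by pigeonhole on interface measure, separately for each $s$ and among all $n$ phases $p\ne l$. That is, pick $j_0=j_0(s)$ with $t^s_{j_0,l}\ge\frac1n\sum_{p\ne l}t^s_{p,l}$. Handing $T_l\cap B(x_0,s)$ to $T_{j_0}$ keeps the partition in $\mathcal{A}^\delta$. This is because those points are already mislabelled: $\Omega\cap B(x_0,s)\subset\bigcup_{I_k}S_{i_j}$ for $s<\eta/2$. Minimality then gives
\[
c_l\sum_{p\ne l}t^s_{p,l}\le c_{j_0}\Big(\sum_{p\ne l}t^s_{p,l}-2t^s_{j_0,l}\Big)+c_{j_0 l}\,p'(s),
\]
and the pigeonhole bound turns this into
\[
[nc_l-(n-2)c_{j_0}]\,{\rm Per}(T_l\cap B(x_0,s);\Omega)\le(2c_{j_0}+2nc_l)\,p'(s),\qquad p(s):=\Ln(T_l\cap B(x_0,s)).
\]
The left coefficient is positive precisely by Assumption \ref{closeness_of_cij}. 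The isoperimetric/ODE argument of Part (I) then finishes.

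This treats each $I_k$ directly, with no induction over strata. Property \ref{property} plays no role here, since $E_0(\cdot,\Omega)$ does not charge $\pa\Omega$; it enters only in the later calibration.
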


\begin{proof}[Proof of Theorem \ref{lemma1}]
Throughout this proof we will suppress the dependence of $\mtd$ on $\delta$ and simply write it as $\mt=(T_1,T_2,\dots, T_n,T_{n+1})$. Further, we will adopt the formulation \eqref{formulation_1} of $E_0(.,\Omega)$ for this proof. Also, at this point we introduce simplifying notation as follows:
 \begin{equation*}
\begin{aligned}
    t_{i,j}^s(x_0)&:=\h^{n-1}\big(\pa^*T_i\cap\pa^*T_j\cap B(x_0,s)\big) \text{ for }x_0 \in \pa \Omega,\\
    c_{max} &:= \max_{1 \leq j \leq n+1} c_j \text{ and }c_{min} := \min_{1 \leq j \leq n+1} c_j.
    \end{aligned}
\end{equation*}
\begin{description}
    \item[Proof of \ref{3.2.1}]
For $r_0$ from Proposition \ref{isoperimetric_theorem}, choose any $r_1 \in(0,r_0)$ such that for any $i \in \{1,2,\dots,n+1\}$ and for any $x \in \pa S_i \intersect \pa \Omega \text{ with } {\rm dist}(x,\cone)\geq \eta$, one has
\begin{equation*}
B(x,r_1)\intersect \Omega \ssubset S_i,
\end{equation*}
 For ease of computation, we will consider the case $i=1$; the other cases follow similarly. Fix an $x_0 \in \pa S_1 \intersect\pa \Omega$ such that ${\rm dist}(x_0,\cone) \geq \eta$. Now, depending on densities, we have two cases. 
\begin{description}
    \item[Case 1] Suppose first that for some $0 <r\leq r_1$ one has the condition
    \begin{equation}\label{case1hype}
    \lt \left(\left[\cup_{j=2}^{n+1}T_j\right] \intersect B(x_0,r)\right) \leq \varepsilon_0 r^n, \text{ where } \varepsilon_0 = \left(\frac{\Lambda}{2n}\right)^n,
    \end{equation}
     where the constant $\Lambda$ is any fixed positive number less than $\left(\frac{C_0 \; c_{min}}{c_{max} + c_{min}}\right)$ with $C_0$ defined in Proposition \ref{isoperimetric_theorem}.
     Then we will show that 
    \begin{equation}\label{case1conclu}
    \lt\left(\left[\cup_{j=2}^{n+1}T_j\right]\intersect B\left(x_0,\frac{r}{2}\right)\right) = 0.
    \end{equation}
To begin, suppressing dependence on $\delta$, we let
\begin{equation*}
m(s) := \lt \big( \left[\cup_{j=2}^{n+1}T_j\right] \intersect B(x_0,s)\big).
\end{equation*}

For each $0 < s < r_1$, we create a new partition by giving $\left[\cup_{j=2}^{n+1}T_j\right] \intersect B(x_0,s)$ to $T_1$. More precisely, we define a new partition $\mf^s := (F_1^s, F_2^s, \dots, F_{n+1}^s)$ by
\begin{equation*}
    \begin{aligned}
        F_1^s &:= T_1 \union \left[B(x_0,s) \intersect \left(\cup_{j=2}^{n+1}T_j\right)\right] , \\
        F_2^s &:= T_2 \setminus B(x_0,s), \\
       \vdots& \\
        F_{n+1}^s &:= T_{n+1} \setminus B(x_0,s),\\
    \end{aligned}
\end{equation*}
where we again have suppressed the dependence of $\mf^s$ on $\delta$: see \cref{unsergerised} and \cref{surgersised}. At this point, we note that the newly created partition $\mf^s$ is an element of $\mathcal{A}^\delta$ and as $\mt = (T_1,T_2,\dots, T_{n+1})$ is assumed to be a minimizer of \eqref{conprob}, we have the minimality condition:
\begin{equation}\label{minimality_1}
E_0(\mt,\Omega)\leq E_0(\mf^s,\Omega)\quad\text{for all}\;s\in (0,r_1).
\
\end{equation}

\begin{figure}
    \centering
    \includegraphics[width=0.5\linewidth]{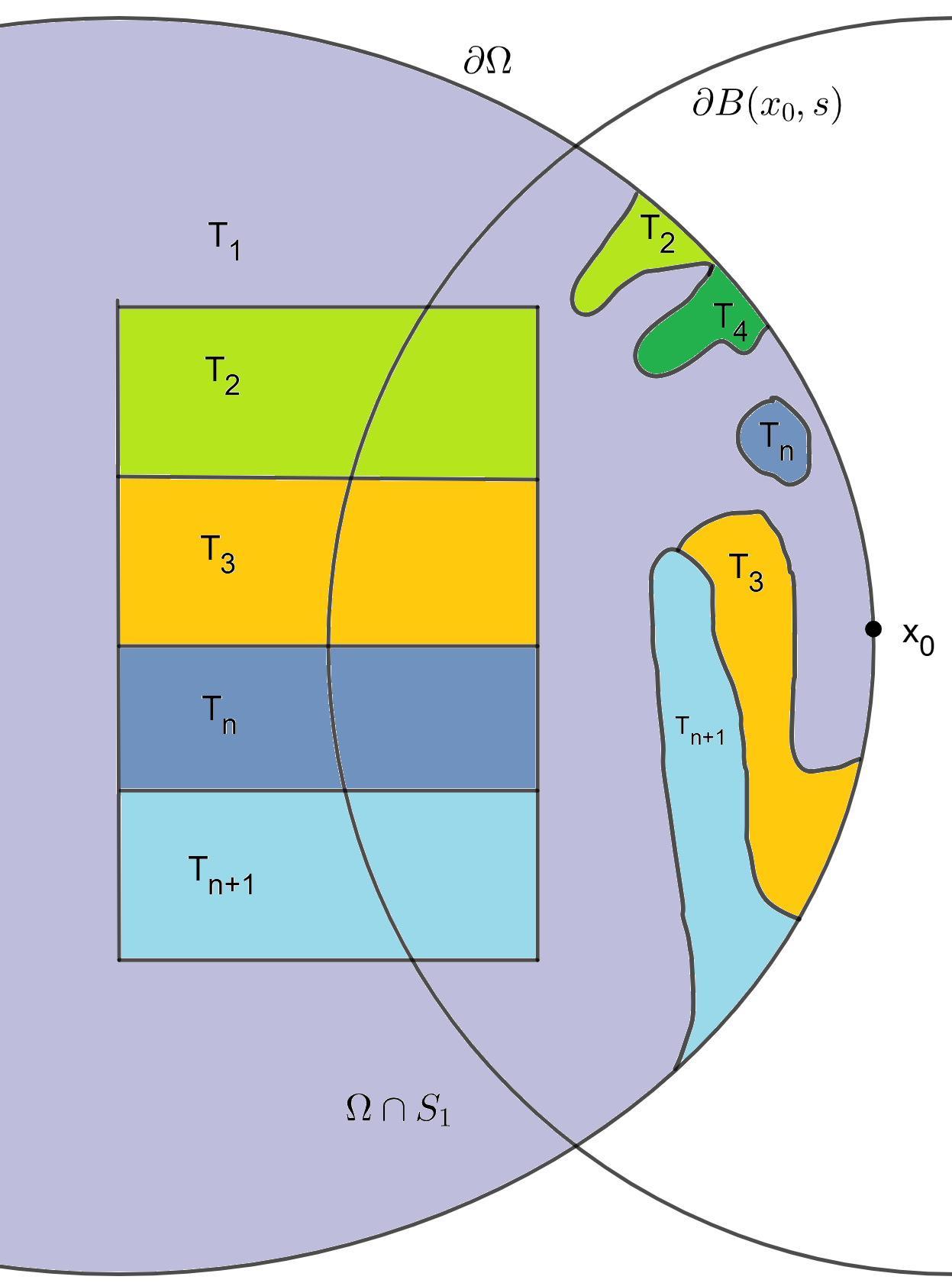}
    \caption{An example of the partition $\mt = (T_1,T_2,\dots  ,T_{n+1})$ in $S_1 \intersect B(x_0,s)$.}
    \label{unsergerised}
\end{figure}

\begin{figure}
    \centering
    \includegraphics[width=0.5\linewidth]{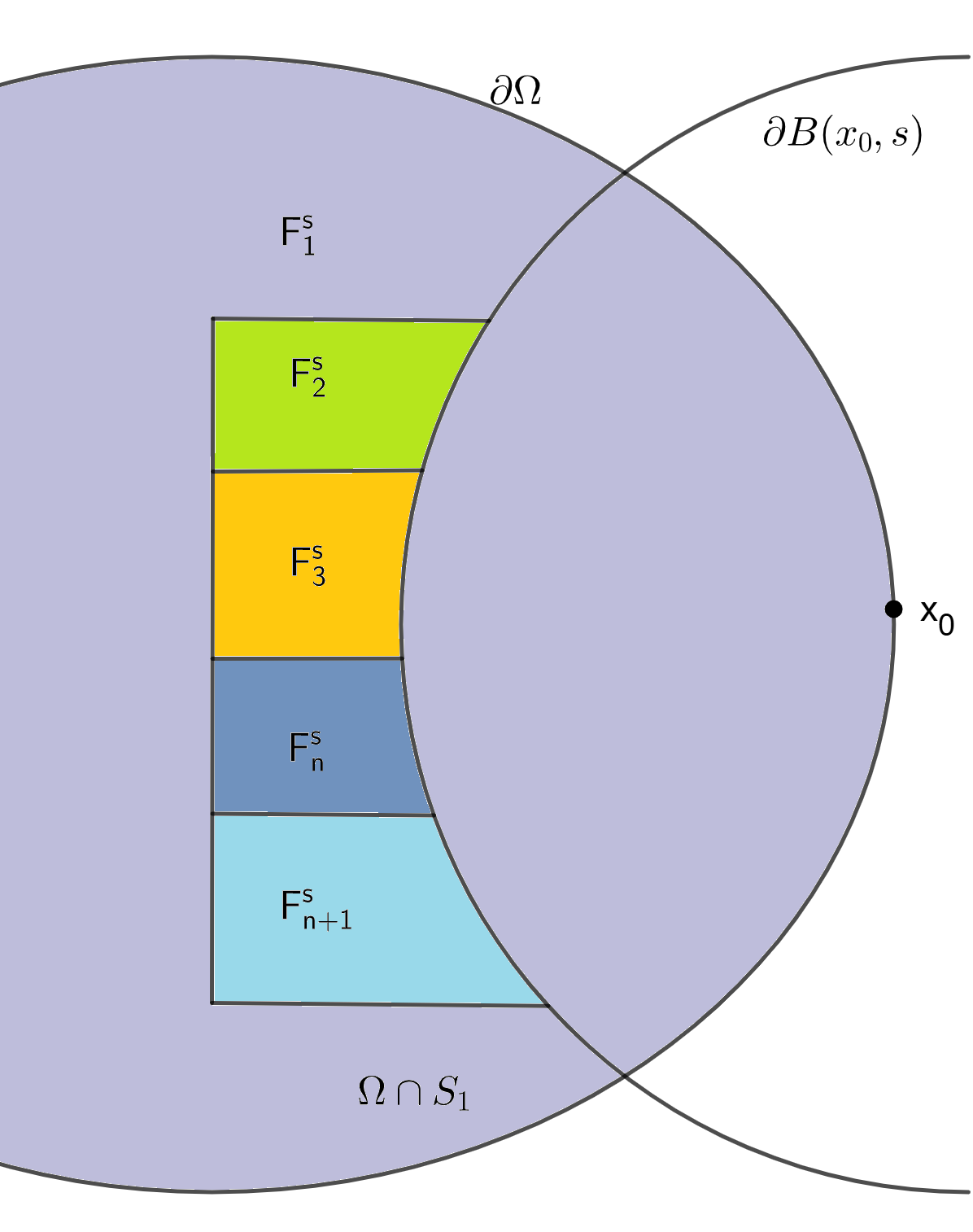}
    \caption{The corresponding modification of the partition from \cref{unsergerised}, given by $\mf^s = (F_1^s,F_2^s,\dots , F_{n+1}^s)$ in $S_1 \intersect B(x_0,s)$.}
    \label{surgersised}
\end{figure}

Following the proof of \cite[Theorem 3.2(I)]{Abhi+Peter}, we compute both sides of \eqref{minimality_1} and use the isoperimetric inequality (Proposition \ref{isoperimetric_theorem}) to obtain
\begin{equation}\label{lambda_1}
    \Lambda m(s)^{\frac{n-1}{n}}\leq m'(s),
\end{equation}
where $\Lambda$ is given in \eqref{case1hype}. One can now integrate \eqref{lambda_1} on the support of $m$ and as in the proof of \cite[Theorem 3.2(I)]{Abhi+Peter}, one can conclude the identity \eqref{case1conclu}.

\item[Case 2] Suppose now that for any $x_0 \in \pa S_1 \intersect \pa \Omega$ such that ${\rm dist}(x_0,\cone) \geq \eta$, the assumption \eqref{case1hype} does not hold. That is, suppose that
    \begin{equation}\label{counterlem1}
      \forall r \leq r_1, \quad  \lt(\left[\union_{j=2}^{n+1}T_j\right]\intersect B(x_0,r)) > \varepsilon_0 r^n.
    \end{equation}
 Let $\delta$ satisfy:
 \begin{equation}\label{Karthik}
 0<\delta < \min\left\{\left(\frac{r_1}{2}\right)^{n+1}, \frac{\varepsilon_0^{n+1}}{2}\right\}.
 \end{equation}
 For this $\delta$, plug in $r = \delta^{\frac{1}{n+1}}$ in \eqref{counterlem1} to get
 \begin{equation}\label{AAA}
      \lt(\left[\union_{j=2}^{n+1}T_j\right] \intersect B(x_0,r_1)) > \varepsilon_0 \delta^{\frac{n}{n+1}}.
 \end{equation}
 From the definition of the admissible class \eqref{admissible_class}, one has $\lt(\mt \Delta \ms) \leq \delta$ and so, we have
\begin{equation}\label{BBB}
    \delta \geq \lt(\left[\union_{j=2}^{n+1}T_j\right] \intersect B(x_0,r_1)).
\end{equation}
Thus, combining \eqref{AAA}, \eqref{BBB} and using \eqref{Karthik}, one sees that \eqref{counterlem1} cannot hold true.
\end{description}
The proof of \ref{3.2.1} is now complete.

\item [Proof of \ref{3.2.2}]
Fix $k \in \{2,3,\dots, n\}$ and a collection $I_k = \{i_1, i_2,\dots ,i_k\}$ as mentioned in the hypothesis. Further, let $x_0$ be in a connected component of
\begin{equation}\label{components}
\left\{  
    \begin{aligned}
        & x \in  \intersect_{i_j \in I_k} \pa S_{i_j} \intersect \pa \Omega \; \text{ such that } {\rm dist}(x,\mathcal{C}) < \eta \text{ and }  {\rm dist}(x,\cup_{i_j \notin I_k}  \pa S_{i_j}) > \frac{\eta}{2}.
    \end{aligned} \right\} .
    \end{equation}
For $r_0$ from Proposition \ref{isoperimetric_theorem}, let $r_2 = r_2(I_k) \in (0,r_0)$ be a number that is less than half the Euclidean distance between the distinct connected components of \eqref{components}. The goal now is to show that for any $i_0 \in \{1,2,\dots, n+1\}\setminus I_k$, $T_{i_0}$ does not meet a neighborhood of $x_0$ in $\bar{\Omega}$.

In analogy with \eqref{case1hype} from the proof of \ref{3.2.1}, we first assume that for some $0<r<r_2$, one has
\begin{equation}\label{X1}
\lt \left( B(x_0,r) \intersect T_{i_0}\right) \leq \varepsilon_0 r^n, \text{ with } \varepsilon_0=\left(\frac{\Lambda}{2n}\right)^n,
\end{equation}
where $\Lambda$ is a positive number to be determined. By an application of the pigeonhole principle, we see that for every $0<s<r$, there exists $j_0 = j_0(s) \in I_k$ such that one has
\begin{equation}\label{pigeonhole}
t_{j_0 ,i_0}^s(x_0)\geq \frac{1}{n}\sum_{\substack{1\leq p_0 \leq n+1,\\ p_0 \neq i_0}}t_{p_0, i_0}^s(x_0).
\end{equation}
Now, we create a competing partition by giving $ B(x_0,s)\intersect T_{i_0}$ to $T_{j_0}$, rather than giving all of $B(x_0,s) \intersect (\cup_{\substack{1\leq p_0 \leq n+1,\\ p_0 \neq i_0}} T_{p_0})$ to $T_{j_0}$. More precisely, for all $0 < s < r$, we define a new partition $\mathcal{G}^s \in \mathcal{A}^{\delta}$ by
    \begin{equation}\label{part_G}
        \begin{aligned}
            G_{j_0}^s &:= T_{j_0}\union \Big[ B(x_0,s)\intersect T_{i_0} \Big], \\
            G_k^s &:= T_k, \text{ if } k \in\{1,2,\dots, n+1\}\text{ and } k \neq j_0, \\
            G_{i_0}^s &:= T_{i_0} \setminus  B(x_0,s), \\
            \mathcal{G}^s &:= (G_1^s,G_2^s,\dots,G_{n+1}^s),
        \end{aligned}
    \end{equation}
    where we again have suppressed the dependence of $\mathcal{G}^s$ on $\delta$. Here, we follow the proof of \cite[Theorem 3.2 (II),(III)]{Abhi+Peter}. For simplicity of computation, fix the value of $s$, and for this $s$, let $i_0=n+1, j_0(s) = 1$. Since $x_0$ is fixed for the current argument, we will suppress the dependence on $x_0$ and simply denote $t_{i,j}^s(x_0)$ by $t_{i,j}^s$. As in \cite{Abhi+Peter}, one can compute both sides of 
    \begin{equation}\label{minim_111}
    E_0(\mt,\Omega) \leq E_0(\mathcal{G}^s,\Omega), \quad \forall s \in ( 0, r_2),
    \end{equation}
    to obtain
    \begin{equation}\label{case2hype2}
    c_1 t_{1,n+1}^s + c_{n+1} {\rm Per}(T_{n+1} \intersect B(x_0,s); \Omega) \leq c_1 \sum_{j=2}^{n+1}t_{j,n+1}^s + [c_1 + 2c_{n+1}] p'(s),
    \end{equation}
    where the analogue of $m(s)$ here is $p(s)$ and is given by
    \[
p(s) := \mathcal{L}^n(T_{n+1} \intersect B(x_0,s)).
\]
  We recall once again here that the goal now is to show the following implication:
    \begin{equation}\label{goal}
    \lt \left(T_{n+1} \intersect B(x_0,r)\right) \leq \varepsilon_0 r^n
  \implies \lt\Big(T_{n+1}\intersect B\big(x_0,\frac{r}{2}\big)\Big) = 0,
    \end{equation}
    where $\varepsilon_0 = \left(\frac{\Lambda}{2n}\right)^n$. 
By the pigeonhole principle (cf. \eqref{pigeonhole}), one has
\[
\sum_{j=2}^n t_{j,n+1}^s - t_{1,n+1}^s \leq\frac{n-2}{n} \left(\sum_{j=2}^n t_{j,n+1}^s + t_{1,n+1}^s\right).
\]
Now, using \eqref{case2hype2}, one sees that
\begin{equation*}
\begin{aligned}
     &\quad c_{n+1}{\rm Per}(T_{n+1} \intersect B(x_0,s);\Omega)\\ &\leq c_1\left( \sum_{j=2}^n t_{j,n+1}^s - t_{1,n+1}^s\right)+ [c_1 + 2 c_{n+1}]p'(s)\\
     &\leq \frac{c_1(n-2)}{n} \left(\sum_{j=2}^n t_{j,n+1}^s + t_{1,n+1}^s+p'(s)\right)+ \left[c_1\left(1 - \frac{(n-2)}{n}\right) + 2 c_{n+1}\right]p'(s)\\
     &\leq \frac{c_1(n-2)}{n} {\rm Per}(T_{n+1}\intersect B(x_0,s);\Omega) + \left[\frac{2c_1}{n} +2 c_{n+1}\right]p'(s),
\end{aligned}
\end{equation*}
where we have used the fact that ${\rm Per}(T_{n+1} \intersect B(x_0,s); \Omega) = \sum_{j=1}^{n}t_{j,n+1}^s + p'(s)$. Grouping like terms, one has
\begin{equation}\label{appl_iso}
\left[nc_{n+1} - c_1\left( n-2\right)\right] {\rm Per}(T_{n+1} \intersect B(x_0,s); \Omega) \leq (2c_1 + 2nc_{n+1}) p'(s).
\end{equation}
 We remark here that the L.H.S. of \eqref{appl_iso} is strictly positive due to Assumption \ref{closeness_of_cij}. Now, as in the previous case, one can apply the isoperimetric inequality to \eqref{appl_iso} to get the differential inequality:
\begin{equation}\label{blah}
\begin{aligned}
    \Lambda p(s)^{\frac{n-1}{n}} \leq  p'(s),
\end{aligned}
\end{equation}
where, for $C_0$ taken from Proposition \ref{isoperimetric_theorem}, $\Lambda$ is any positive number such that
\begin{equation}\label{defn_of_lambda}
    0<\Lambda < C_0 \left[\frac{nc_{min}- (n-2)c_{max}}{(2n+2)c_{max}} \right].
\end{equation} 
 In the next step, as in the analysis following \eqref{lambda_1}, one can integrate the differential inequality \eqref{blah} on the support of $p$ to show that the implication \eqref{goal} is true in this case.

Finally, as seen earlier in the proof of case 2 of part \ref{3.2.1}, one sees that the negation of \eqref{X1} cannot be true by taking
\[
0<\delta < \min\left\{\left(\frac{r_2}{2}\right)^{n+1}, \frac{\varepsilon_0^{n+1}}{2}\right\}.
\]

We have now shown that for an $i_0 \in\{1,2,\dots,n+1\}\setminus I_k$, the set $T_{i_0}$ does not meet a neighborhood of $x_0$ in $\bar{\Omega}$. Next, choose any $ k_0 \in \{1,2,\dots,n+1\}\setminus (I_k \union \{i_0\})$ and for this $k_0$, the goal is to show that $T_{k_0}$ does not interact with a neighborhood of $x_0$ in $\bar{\Omega}$.

That is, by the pigeonhole principle, there exists $l_0 = l_0(s) \in I_k$ such that one has
\begin{equation*}
t_{l_0, k_0}^s(x_0)\geq\frac{1}{n} \sum_{\substack{1\leq p_0 \leq n+1,\\ p_0 \neq k_0}}t_{p_0,k_0}^s(x_0).
\end{equation*}
As before, for a fixed $s$, one can construct a new partition by giving  $B(x_0,s) \intersect T_{k_0}$ to $T_{l_0}$ in the ball $B(x_0,s)$. Computing both sides of the analogue of the inequality \eqref{minimality_1} here, one gets the corresponding differential inequality \eqref{blah} in this case as
    \[
     \Lambda \tilde{p}(s)^{\frac{n-1}{n}} \leq \frac{d\tilde{p}}{ds}(s),
    \]
    where $\Lambda$ is the number chosen to be as in \eqref{defn_of_lambda} and the role of $m(s)$ or $p(s)$ from the earlier steps is replaced here by $\tilde{p}(s)$ which is defined as\[
\tilde{p}(s) := \mathcal{L}^n(T_{l_0} \intersect B(x_0,s)).
\]

The rest of the proof is similar to the earlier case. Using induction, in a similar way, one can show that $T_{q_0}$ does not meet a neighborhood of $x_0$ in $\bar{\Omega}$, for any $q_0 \in \{ 1,2,\dots, n+1\} \setminus I_k$. 

\end{description}

Finally, we combine these observations to conclude Theorem \ref{lemma1} for
\begin{equation}\label{delta_small}
0<\delta < \min\left\{\left(\frac{\tilde{r}}{2}\right)^{n+1}, \frac{1}{2}\left(\frac{\Lambda}{2n}\right)^{n}\right\},
\end{equation}
where $\tilde{r}$ and $\Lambda$ are any positive fixed numbers such that
\begin{equation*}
    \begin{aligned}
        &0<\tilde{r}< \min\left\{ r_2(I_k), r_1 \; ; \; k \in\{2,3,\dots, n\}, I_k \subset\{1,2,\dots, n+1\}.
        \right\},\\
        \text{ and }&0 < \Lambda < \min\left\{ \frac{C_0c_{min}}{c_{max}+c_{min}}, C_0 \left[\frac{nc_{min}- (n-2)c_{max}}{(2n+2)c_{max}} \right]\right\}.
    \end{aligned}
\end{equation*}

The proof of Theorem \ref{lemma1} is now complete.
\end{proof}

We have now established that, for $\delta$ as in \eqref{delta_small}, if $\mathcal{T}^{\delta}$ meets $\ms$ on $\pa\Omega$, it does so only on the dented part of $\pa\Omega$, within the $\eta$-neighborhood of the simplex cone where Property \ref{property} holds true. Motivated again by the ideas in \cite[Proof of Theorem 3.1]{Abhi+Peter} (see also \cite{Zeimer,Morgan}), we will now appeal to a calibration argument to complete the proof of Theorem \ref{isolated_local_minimizer}. The main difference from \cite[Proof of Theorem 3.1]{Abhi+Peter} is that below, we apply the Gauss-Green theorem to a set of $n+1$ integrals to obtain \eqref{boundary=inside}, while in \cite{Abhi+Peter}, we apply the Gauss-Green theorem to $4$ integrals. The rest of the proof from \eqref{boundary=inside} is the same as in \cite{Abhi+Peter}, but for the sake of completeness, we provide some relevant computations from thereon.

\begin{proof}[Proof of Theorem \ref{isolated_local_minimizer}] 
We first recall that $\vec{n}_{i,j}$ denotes the unit normal to $\pa S_i \intersect \pa S_j$ pointing from $S_i$ to $S_j$, $\vec{\nu}_{i,j}$ denotes the unit normal to $\pa^* T_i \intersect \pa^* T_j$ pointing from $T_i$ to $T_j$ and $\vec{\nu}_{\pa \Omega}(x)$ denotes the unit outward normal to $\pa\Omega$ at $x \in \pa \Omega$. Here, we will adopt the formulation \eqref{formulation_2} of $E_0(.,\Omega)$ instead of \eqref{formulation_1}. The first step in this proof is to apply the Gauss-Green theorem to the three integrals mentioned below: 
\begin{equation}\label{Q23}
    \begin{aligned}
        0 &= \int_{T_1} {\rm div}(\vec{n}_{1,n+1}) \,dx = \sum_{k=2}^{n+1}\int_{\Omega\intersect\pa^*T_1 \intersect \pa^* T_k }\vec{n}_{1,n+1}\cdot \vec{\nu}_{1,k} \,d\h^{n-1}  +  \int_{\pa \Omega \intersect \pa^*T_1} \vec{n}_{1,n+1}\cdot \vec{\nu}_{\pa\Omega} \,d\h^{n-1}, 
    \end{aligned}
\end{equation}
\begin{equation}\label{Q22}
    \begin{aligned}
         0 &= \int_{T_2} {\rm div}(\vec{n}_{1,n+1}) \,dx = \sum_{\substack{k=1,\\k\neq 2}}^{n+1}\int_{\Omega\intersect\pa^*T_2 \intersect \pa^* T_k}\vec{n}_{1,n+1}\cdot \vec{\nu}_{2,k} \,d\h^{n-1}  +  \int_{\pa \Omega \intersect \pa^*T_2} \vec{n}_{1,n+1}\cdot \vec{\nu}_{\pa\Omega} \,d\h^{n-1}, 
    \end{aligned}
\end{equation}
and
\begin{equation}\label{Q21}
    \begin{aligned}
        0 &= \int_{T_2} {\rm div}(\vec{n}_{2,1}) \,dx = \sum_{\substack{k=1,\\k\neq 2}}^{n+1}\int_{\Omega\intersect \pa^*T_2 \intersect \pa^* T_k}\vec{n}_{2,1}\cdot \vec{\nu}_{2,k} \,d\h^{n-1}  +  \int_{\pa \Omega \intersect \pa^*T_1} \vec{n}_{2,1}\cdot \vec{\nu}_{\pa\Omega} \,d\h^{n-1}. 
    \end{aligned}
\end{equation}

We multiply $\vec{n}_{i,j}$ by $c_{ij}$ in equations \eqref{Q23}, \eqref{Q22} and \eqref{Q21} and adds all three equations. Using the cyclic property of the normals \eqref{gencij} and the fact that $\vec{\nu}_{i,j} = - \vec{\nu}_{j,i}$ and $\vec{n}_{i,j} = - \vec{n}_{j,i}$, one obtains
\begin{equation}\label{step_1_of_new_eqn}
    \begin{aligned}
        0 &= \int_{\Omega\intersect\pa^* T_1 \intersect \pa^*T_2}c_{21}\vec{n}_{2,1}\cdot \vec{\nu}_{2,1} \,d\h^{n-1}\\
        &\quad + \sum_{k=3}^{n+1}\left[ \int_{\Omega\intersect\pa^*T_1 \intersect \pa^* T_k}c_{1,n+1}\vec{n}_{1,n+1}\cdot\vec{\nu}_{1,k}\,d\h^{n-1}+ \int_{\Omega\intersect\pa ^*T_2 \intersect \pa^*T_k}c_{2,n+1}\vec{n}_{2,n+1}\cdot \vec{\nu}_{2,k}\,d\h^{n-1} \right]\\
        &\quad +\int_{\pa \Omega \intersect\pa^* T_1 } c_{1,n+1}\vec{n}_{1,n+1}\cdot \vec{\nu}_{\pa\Omega} \,d\h^{n-1}+\int_{\pa \Omega \intersect\pa^* T_2 } c_{2,n+1}\vec{n}_{2,n+1}\cdot \vec{\nu}_{\pa\Omega}\, d\h^{n-1}.
    \end{aligned}
\end{equation}

Likewise, adding the full of \eqref{eqn3.35} below
\begin{equation}\label{eqn3.35}
    \begin{aligned}
         0 &= \int_{T_3} c_{3,n+1}\,{\rm div}(\vec{n}_{3,n+1}) \;dx \\
        &= \sum_{\substack{k=1,\\k\neq 3}}^{n+1}\int_{\Omega\intersect\pa^*T_3 \intersect \pa^* T_k}c_{3,n+1}\vec{n}_{3,n+1}\cdot \vec{\nu}_{3k} \,d\h^{n-1}  +  \int_{\pa \Omega \intersect \pa^*T_3} c_{3,n+1}\vec{n}_{3,n+1}\cdot \vec{\nu}_{\pa\Omega} \,d\h^{n-1}  
    \end{aligned}
\end{equation}
to \eqref{step_1_of_new_eqn}, one gets
\begin{equation}\label{step_1_of_new_eqn_new}
     \begin{aligned}
        0 &= \sum_{1\leq i<j\leq 3}\int_{\Omega\intersect\pa^* T_i \intersect \pa^*T_j}c_{ij}\vec{n}_{i,j}\cdot \vec{\nu}_{i,j} \,d\h^{n-1} \\
        &\qquad + \sum_{k=4}^{n+1}\left[ \int_{\Omega\intersect\pa^*T_1 \intersect \pa^* T_k}c_{1,n+1}\vec{n}_{1,n+1}\cdot\vec{\nu}_{1,k}\,d\h^{n-1}+ \int_{\Omega\intersect\pa ^*T_2 \intersect \pa^*T_k}c_{2,n+1}\vec{n}_{2,n+1}\cdot \vec{\nu}_{2,j}\,d\h^{n-1}\right.\\
        &\qquad + \left.\int_{\Omega\intersect\pa ^*T_3 \intersect \pa^*T_k}c_{3,n+1}\vec{n}_{3,n+1}\cdot \vec{\nu}_{3,j}\,d\h^{n-1}
        \right] + \sum_{j=1}^3\int_{\pa \Omega \intersect\pa^* T_j } c_{j,n+1}\vec{n}_{j,n+1}\cdot \vec{\nu}_{\pa\Omega} \,d\h^{n-1}.
    \end{aligned}
\end{equation}
Following the pattern, by induction, after applying the Gauss-Green theorem and then adding each of the  integrals in the set \[
\left\{ \int_{T_j}c_{j,n+1}\,{\rm div}(\vec{n}_{j,n+1}) \,dx \right\}_{j=4,5,\dots, n}
\]
to \eqref{step_1_of_new_eqn_new} one at a time, one obtains:
\begin{equation}\label{boundary=inside}
    \sum_{1\leq i < j \leq n+1}\int_{\Omega\intersect\pa^* T_i \intersect \pa^* T_j} c_{ij}\vec{n}_{i,j}\cdot \vec{\nu}_{i,j}\, d\h^{n-1} = \sum_{j=1}^{n}\int_{\pa \Omega \intersect\pa^* T_j } c_{j,n+1}\vec{n}_{n+1,j}\cdot \vec{\nu}_{\pa\Omega} \,d\h^{n-1}.
\end{equation}
Upon noting that  $\vec{n}_{i,j}\cdot \vec{\nu}_{i,j} \leq 1$ for all $i,j$ and using \eqref{boundary=inside} above, we have

\begin{equation}\label{1_final}
    \begin{aligned}
        E_0(\mt,\Omega)&= \sum_{1\leq i <j \leq n+1} c_{ij}\h^{n-1} \left( \pa^* T_i \intersect \pa^* T_j \intersect \Omega\right) \\
         &\geq \sum_{1 \leq i < j \leq n+1} \int_{\Omega\intersect\pa^*T_i \intersect \pa^*T_j} c_{ij} \vec{n}_{i,j} \cdot \vec{\nu}_{i,j} \, d\h^{n-1} \\
         &= \sum_{i=1}^n\int_{\pa \Omega \intersect \pa^*T_i}c_{i,n+1}\vec{n}_{n+1,i}\cdot \vec{\nu}_{\pa\Omega} \,d\h^{n-1}.
    \end{aligned}
\end{equation}
Here we also observe that if we replace $\mt$ by $\ms$ in the calculation above, then one instead obtains the identity 
\begin{equation}\label{E_0_s}
E_0(\ms,\Omega) = \sum_{i=1}^n\int_{\pa \Omega \intersect \pa^*S_i}c_{i,n+1}\vec{n}_{n+1,i}\cdot \vec{\nu}_{\pa\Omega} \,d\h^{n-1}. 
\end{equation}
  The goal here is to first argue that $\mt$ and $\ms$ agree on $\pa\Omega$.  To do so, we first decompose the domains of integration in the last line of \eqref{1_final} by writing each $\pa\Omega\cap\pa^*T_i$ as $\pa\Omega\cap\pa S_i$ plus corrections and then use \eqref{E_0_s} to obtain
\begin{equation*}
    \begin{aligned}
        E_0(\mt,\Omega) \geq E_0(\ms,\Omega) &+ \sum_{i=1}^n \int_{\pa \Omega\intersect\pa S_i \intersect \big(\bigcup\{ \pa^* T_j | j=1,2,\dots, n+1, j\neq i\}\big)} c_{i,n+1} \vec{n}_{i,n+1}\cdot \vec{\nu}_{\pa \Omega} \,d\h^{n-1}\\
        & + \sum_{i=1}^n \int_{\pa \Omega\intersect\pa^* T_i \intersect \big(\bigcup\{ \pa^* S_j | j=1,2,\dots, n+1, j\neq i\}\big)} c_{i,n+1} \vec{n}_{n+1,i}\cdot \vec{\nu}_{\pa \Omega}\, d\h^{n-1}.
    \end{aligned}
\end{equation*}
Then, invoking property \eqref{gencij} and using Theorem \ref{lemma1}, upon simplification, we obtain
\begin{equation}\label{11_final_11}
    \begin{aligned}
        E_0(\mt,\Omega) \geq E_0(\ms,\Omega) &+ \sum_{1 \leq i <j \leq n+1}\int_{\pa \Omega\intersect\pa S_i \intersect\pa^*T_j}c_{ij}\vec{n}_{i,j}\cdot \vec{\nu}_{\pa \Omega}\,d\h^{n-1} \\
        &+ \sum_{1 \leq i <j \leq n+1}\int_{\pa \Omega\intersect\pa S_j \intersect\pa^*T_i}c_{ij}\vec{n}_{j,i}\cdot\vec{\nu}_{\pa \Omega}\,d\h^{n-1}.
    \end{aligned}
\end{equation}
Next we observe that each of the integrals in the sum on the right-hand side of \eqref{11_final_11} is non-negative by \eqref{goodnormals} and Theorem \ref{lemma1}. To conclude, from \eqref{11_final_11}, we see that unless
\begin{equation}\label{Peter}
    \sum_{i=1}^{n+1}\h^{n-1} \Big[ (\pa^* T_i \Delta \pa S_i) \intersect \pa \Omega \Big] = 0,
\end{equation}
we would find that $E_0(\mt,\Omega) > E_0(\ms,\Omega)$, contradicting the minimality of $\mt$.
Consequently, the last line of \eqref{1_final} equals $E_0(\ms,\Omega)$,
and so we find that

    \begin{align}
       E_0(\ms,\Omega) \geq E_0(\mt,\Omega) \geq \sum_{1 \leq i < j \leq n+1} \int_{\Omega\intersect\pa^*T_i \intersect \pa^*T_j} c_{ij} \vec{n}_{i,j} \cdot \vec{\nu}_{i,j} \, d\h^{n-1} = E_0(\ms,\Omega),
       \label{innereq}
    \end{align}
again contradicting the minimality of $\mt$ unless the inequalities in \eqref{innereq} are in fact equalities. We now conclude that for all distinct $i, j \in \{1,2,\dots,n+1\}$, 
\begin{equation}\label{lalala}
\vec{n}_{i,j} = \vec{\nu}_{i,j}, \; \h^{n-1} \text{ a.e. on }\Omega\intersect\pa^* T_i \intersect \pa^* T_j .
\end{equation}
Finally, with an appeal to Lemma \ref{hyperplane-theorem}, we conclude that all the sets $\Omega\intersect\pa^* T_i \intersect \pa^* T_j$ are all planar. Together with \eqref{lalala} , one sees that $\pa^* T_i \intersect \pa^* T_j$ agrees with $\pa S_i \intersect \pa S_j$ in $\Omega$. Hence, $\mt=\ms$ and the proof is complete. 
\end{proof}

{\bf Acknowledgements:}
The author would like to thank his advisor Peter Sternberg for the discussions and the valuable feedback on this article. The author would also like to thank Camillo De Lellis for his interest in this project. Finally, the author would like to thank Ramyak Bilas for suggesting the concept of an $n$-simplex and for the discussions on the description of the domain in $\R^n$.

{\bf Conflict of Interest Statement:} The author has no conflicts of interest to report.

{\bf Data Availability Statement:} There is no external data associated with this research.
\bibliographystyle{alpha}
\bibliography{refer}

\end{document}